\title{Legendre-Teege Quadratic Reciprocity}
\author{Mark B. Villarino\\
Escuela de Matem\'atica, Universidad de Costa Rica,\\
10101 San Jos\'e, Costa Rica}
\date{}
\theoremstyle{plain}
\newtheorem{thm}{Theorem}[section]   
\newtheorem{cor}[thm]{Corollary}     
\newtheorem{lemma}[thm]{Lemma}       
\theoremstyle{definition}
\newtheorem{defn}{Definition}[section] 
\theoremstyle{definition}
\newtheorem{exmp}{Example}[section]
\numberwithin{equation}{section}
\def\section{\@startsection{section}{1}{\z@}{-3.5ex plus -1ex minus
			  -.2ex}{2.3ex plus .2ex}{\large\bf}}
\def\subsection{\@startsection{subsection}{2}{\z@}{-3.25ex plus -1ex
			  minus -.2ex}{1.5ex plus .2ex}{\normalsize\bf}}
\renewcommand{\leq}{\leqslant}  
\begin{document}

\maketitle


\section{Introduction} 
\label{sec:intro}

The law of quadratic reciprocity is one of the most celebrated theorems in all of mathematics.  Its fame rests not only on its technical use in the theory of numbers but also on the influence it has had in the creation and development of entire branches of mathematics such as commutative algebra and elliptic curves.  Franz Lemmermeyer's important treatise \emph{Reciprocity Laws}~\cite{Lemmermeyer} details the history and mathematics of its proofs and is well worth perusing.

The quadratic reciprocity law also enjoys another extraordinary characteristic. \emph{ More proofs of it have been published than of any other mathematical theorem, except for Pythagoras' theorem. } Indeed, no other theorem (famous or not) even comes close.  Lemmermeyer ~\cite{Baumgart} cites 314 published proofs up to 2015 and most likely a few more have appeared since.

So it is an historical irony and curiosity that the \emph{first attempted} published proof is \emph{not} on that list.

We briefly review its history.

The famous French mathematician, Adrien-Marie Legendre (1752-1833), published a treatise on the theory of numbers~\cite{Legendre} in which he:
\begin{itemize}
  \item introduced the ``Legendre symbol";
  \item introduced the word ``reciprocity;"
  \item stated the quadratic reciprocity theorem in the form given in all texts today;
  \item offered a proof that contained a ``gap."
\end{itemize}

His proof was a revision of an earlier (1785) proof but it had a new gap.

We formulate this new gap in the form of a lemma.

\begin{lemma}(Legendre's Lemma)
\label{LL}
Given any prime of the form $a:=4n+1$ there exists a prime of the form $4n+3$ of which $a$ is a quadratic non-residue.
\end{lemma}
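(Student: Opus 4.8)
The plan is to recast the lemma as the impossibility of a non‑existence and to produce the prime $q$ by way of Dirichlet's theorem on primes in arithmetic progressions. Suppose, for contradiction, that $a$ is a quadratic residue modulo \emph{every} prime $q\equiv 3\pmod 4$. Since $a$ is prime, the map $n\mapsto\left(\tfrac{n}{a}\right)$ is a non‑trivial character modulo $a$, so there is an integer $b\in\{1,\dots,a-1\}$ with $\left(\tfrac{b}{a}\right)=-1$; necessarily $\gcd(b,a)=1$. As $a$ is odd, $\gcd(4,a)=1$, so by the Chinese remainder theorem the conditions $q\equiv b\pmod a$ and $q\equiv 3\pmod 4$ cut out a single residue class modulo $4a$ coprime to $4a$, and Dirichlet's theorem then furnishes a prime $q$ in that class.

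For this $q$ one has $\left(\tfrac{q}{a}\right)=\left(\tfrac{b}{a}\right)=-1$, whereas the contradiction hypothesis forces $\left(\tfrac{a}{q}\right)=+1$. To close the argument it suffices to know that a prime $a\equiv 1\pmod 4$ and a prime $q\equiv 3\pmod 4$ always satisfy $\left(\tfrac{a}{q}\right)=\left(\tfrac{q}{a}\right)$; this is the instance of quadratic reciprocity in which at least one of the two primes is $\equiv 3\pmod 4$, which is accessible by Legendre's methods — supplemented, where he needed auxiliary primes in prescribed residue classes, by Dirichlet's theorem — and, crucially, \emph{without} invoking the present lemma. Granting it, the two values just computed contradict each other, so some prime $q\equiv 3\pmod 4$ must fail to have $a$ as a residue, which is the lemma.

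The subtle point — and precisely where Legendre's own reasoning broke down — is the division of labour just used: the present lemma is permitted only to finish the last remaining case of reciprocity (both primes $\equiv 1\pmod 4$), so its proof must rest on something genuinely independent of that case, and Dirichlet's theorem, whose proof borrows nothing from reciprocity, is exactly such a tool. I therefore expect the argument below to run along these lines, or else to replace the appeal to Dirichlet by a more elementary but considerably more intricate construction of a composite ``witness'' $n\equiv 3\pmod 4$, coprime to $a$, with Jacobi symbol $\left(\tfrac{a}{n}\right)=-1$, after which multiplicativity of the Jacobi symbol produces a prime factor $p\mid n$ with $\left(\tfrac{a}{p}\right)=-1$; in that variant the main obstacle is arranging that such a $p$ can be taken $\equiv 3\pmod 4$ rather than $\equiv 1\pmod 4$.
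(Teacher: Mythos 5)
There is a genuine gap, and it is precisely the circularity that the lemma exists to avoid. Your argument needs the implication $\left(\frac{a}{q}\right)=+1\Rightarrow\left(\frac{q}{a}\right)=+1$ for $a\equiv 1$, $q\equiv 3\pmod 4$ (equivalently $\left(\frac{q}{a}\right)=-1\Rightarrow\left(\frac{a}{q}\right)=-1$), and you assert that this case of reciprocity is ``accessible by Legendre's methods without invoking the present lemma.'' It is not. In Legendre's development (Theorem~\ref{QRT}) the only lemma-free cases are I--IV; what you need is exactly Case VII (or its contrapositive VIII), and Legendre's proof of those cases requires an auxiliary prime $\beta\equiv 3\pmod 4$ with $\left(\frac{a}{\beta}\right)=-1$ --- i.e., the very statement being proved. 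Your fallback, that Dirichlet's theorem can supply the auxiliary primes, does not repair this: Dirichlet lets you prescribe $q$ in a residue class mod $4a$, which controls $\left(\frac{q}{a}\right)$, but the assertion that $\left(\frac{a}{q}\right)$ is determined by $q\bmod 4a$ is itself equivalent to quadratic reciprocity, so you cannot select a class guaranteeing $\left(\frac{a}{q}\right)=-1$ without the theorem you are trying to feed. Of course, if one grants full reciprocity (proved by some independent route, say Gauss's), then the lemma is immediate from reciprocity plus Dirichlet --- your contradiction framing is not even needed, since the prime $q\equiv 3\pmod 4$, $q\equiv b\pmod a$ with $\left(\frac{b}{a}\right)=-1$ directly satisfies $\left(\frac{a}{q}\right)=\left(\frac{q}{a}\right)=-1$ --- but such a proof is useless for the purpose the lemma serves in this paper, namely completing Legendre's proof of reciprocity. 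The same objection applies to your sketched Jacobi-symbol variant: controlling $\left(\frac{a}{n}\right)$ for a constructed $n\equiv 3\pmod 4$ again requires Jacobi reciprocity, and in any case a witness $n$ with $\left(\frac{a}{n}\right)=-1$ need not have a prime factor that is simultaneously $\equiv 3\pmod 4$ and a prime at which $a$ is a nonresidue.

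For contrast, the proof given in the paper (Teege's) avoids reciprocity entirely: it divides the two instances of Dirichlet's fundamental identity for determinants $+p$ and $-p$, uses the class number formulas to show the quotient tends to the finite value $\frac{h(p)\ln(T+U\sqrt{p})}{2\pi\,h(-p)}$, and observes that if $a=p$ were a residue of every prime $B\equiv 3\pmod 4$ the right-hand side would contain the full product over such $B$ and diverge with $\sum 1/B$; Rogers' alternative uses Selberg's elementary lemma in the same reciprocity-free spirit. The hard content of the lemma (especially for $a\equiv 1\pmod 8$, where the elementary trick with $a+1$ fails) lies exactly in producing the nonresidue condition $\left(\frac{a}{q}\right)=-1$ without reciprocity, and your proposal does not supply that.
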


For example $a=17$ is a quadratic nonresidue of $b=3$.

Legendre showed that he could complete his proof if he could prove his lemma.  Although he made several attempts he never succeeded.  Indeed, it appears that the lemma is much \emph{harder} to prove than the theorem, itself.  

In 2011, Weintraub \cite{Weintraub} published in this Monthly a very nice and detailed account of Legendre's attempted proofs of the quadratic reciprocity law , but only cited, without proof, Legendre's unproved lemma as ``Hypothesis B."

In the subsequent 220+ years after Legendre only two proofs of the lemma have appeared.  One is by a relatively unknown German mathematician, Hermann Teege~\cite{Teege} in 1923, while the other was authored by Kenneth Rogers~\cite{Rogers} and was published in 1971.  We will say more about these proofs later on.

The literature does not seem to contain an easily accessible, ab initio,  connected, detailed, rigorous and complete presentation of a proof of Legendre's Lemma.  This paper is designed fill that gap.

In order to make our paper as self-contained as possible we will first outline the major steps in Legendre's original proof in its final form and then fill the gap.  He based his proof on\begin{itemize}
  \item  a theorem 
on quadratic forms which he rigorously proved; and
  \item  the theory of the Pell equation; and
    \item Legendre's (unproved) lemma; and
    \item the multiplicativity of the legendre symbol and for any prime $p$, Euler's result $ \left(\frac{-1}{p}\right)=(-1)^{\frac{p-1}{2}}.$
\end{itemize}

We will follow (and expand) Teege's arrangement \cite{Teege1} of Legendre's proof.


\section{Legendre's theorem on quadratic forms}

The theorem proven by Legendre~\cite{Legendre} \S27, and which is today rather famous in its own right, is the following:

\begin{thm}
\label{Legendrethm}
If $a,b,c$ denote numbers which are relatively prime in pairs, not one of which is zero nor divisible by a square, then the equation
\begin{equation}
\label{LE}
ax^2+by^2+cz^2=0
\end{equation}

\noindent has no solution in integers unless $-bc,-ac,-ab$ are, respectively, quadratic residues of $a,b,c$, and at least one of them is positive and one of them is negative.  However, if these four conditions are fulfilled, then the equation is solvable in integers.

\end{thm}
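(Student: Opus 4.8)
The plan is to separate the easy necessity of the four conditions from the substantial sufficiency, handled differently.

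\emph{Necessity.} Suppose $(x,y,z)\ne(0,0,0)$ solves \eqref{LE}; dividing out $\gcd(x,y,z)$ I may assume the solution primitive. First I would check that $y$ and $z$ are units modulo $a$ (and symmetrically for the others): if a prime $p\mid a$ divided $z$, then $p\mid by^2$, so $p\mid y$ because $\gcd(a,b)=1$, hence $p^2\mid by^2+cz^2=-ax^2$; since $a$ is squarefree $p^2\nmid a$, forcing $p\mid x$ and contradicting primitivity. Thus no prime of $a$ divides $yz$, and as $a$ is squarefree $y,z\in(\mathbb{Z}/a)^{\times}$. Reducing \eqref{LE} modulo $a$ and multiplying by $bz^{-2}$ gives $(byz^{-1})^2\equiv-bc\pmod a$, so $-bc$ is a quadratic residue of $a$; the assertions for $b$ and $c$ are identical. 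The sign condition is immediate: were $a,b,c$ all of one sign, $ax^2+by^2+cz^2$ could not vanish at a nonzero point.

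\emph{Sufficiency: normalisation and the lattice.} Assume the four conditions. Permuting the pairs $(a,x),(b,y),(c,z)$ and, if necessary, multiplying \eqref{LE} through by $-1$ --- operations under which squarefreeness, pairwise coprimality, the three residue conditions and the mixed-sign condition all persist --- I may assume $a>0$ and $b=-B$, $c=-C$ with $B,C>0$, so the goal is to solve $aX^2=BY^2+CZ^2$ nontrivially. The configurations in which at least two of $|a|,|b|,|c|$ equal $1$ I would settle by hand; each collapses to writing a squarefree integer as a sum of two squares, which the pertinent residue condition supplies. Otherwise at least two of $|a|,|b|,|c|$ exceed $1$, so $|ab|,|bc|,|ca|$ are squarefree and greater than $1$, hence none is a perfect square --- a point needed below. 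Using the residue conditions, pick $w,w',w''$ with $w^2\equiv-bc\pmod{|a|}$, $(w')^2\equiv-ac\pmod{|b|}$, $(w'')^2\equiv-ab\pmod{|c|}$; these give, at each prime $p\mid a$, the factorisation $ax^2+by^2+cz^2\equiv b^{-1}(by-wz)(by+wz)\pmod p$, and likewise at the primes of $b$ and $c$. Put
\[
\Lambda:=\{\,(x,y,z)\in\mathbb{Z}^3:\ |a|\mid by-wz,\ \ |b|\mid ax-w'z,\ \ |c|\mid ax-w''y\,\}.
\]
Since $a,b,c$ are pairwise coprime, each defining relation is a nontrivial linear congruence to a distinct squarefree modulus, so $\Lambda$ is a sublattice of $\mathbb{Z}^3$ of index exactly $|abc|$, and on $\Lambda$ the form $f:=ax^2+by^2+cz^2$ is divisible by each of $|a|,|b|,|c|$, hence by $|abc|$.

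\emph{Sufficiency: the short vector.} The box of lattice points $(x,y,z)$ with $x,y,z\ge0$, $x^2\le|bc|$, $y^2\le|ac|$, $z^2\le|ab|$ contains more than $\sqrt{|bc|\,|ac|\,|ab|}=|abc|$ elements, whereas $\mathbb{Z}^3/\Lambda$ has only $|abc|$; so two distinct box points are congruent modulo $\Lambda$, and their difference is a nonzero $v=(x,y,z)\in\Lambda$ with $x^2\le|bc|$, $y^2\le|ac|$, $z^2\le|ab|$. Then $f(v)=ax^2-By^2-Cz^2$ obeys $-2|abc|\le f(v)\le|abc|$ and $|abc|\mid f(v)$, so $f(v)\in\{-2|abc|,-|abc|,0,|abc|\}$. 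The value $|abc|$ would force $x^2=|bc|$, and $-2|abc|$ would force $y^2=|ac|$; both are excluded since $|bc|$ and $|ac|$ are not perfect squares. Hence either $f(v)=0$, a nontrivial solution, or $f(v)=-|abc|$, i.e.\ $By^2+Cz^2=a(x^2+BC)$, in which case the identity
\[
(By^2+Cz^2)(x^2+BC)=B(xy+Cz)^2+C(xz-By)^2
\]
rewrites the relation as $a(x^2+BC)^2=B(xy+Cz)^2+C(xz-By)^2$; that is, $(x^2+BC,\ xy+Cz,\ xz-By)$ solves $aX^2=BY^2+CZ^2$, and it is nontrivial because its first coordinate is positive.

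\emph{The main obstacle.} The delicate point is exactly this last dichotomy: the counting argument (equivalently, Minkowski's convex-body theorem) only confines $f(v)$ to a fixed multiple of $|abc|$ and does not on its own yield $f(v)=0$. The proof survives because the extreme values $\pm|abc|$ and $-2|abc|$ would entail products of pairs of the coefficients being perfect squares --- impossible by squarefreeness outside a few trivial configurations --- and because the one remaining bad value is promoted to a genuine zero by the explicit substitution above. Everything else (the prime-by-prime factorisation of $f$, the index count for $\Lambda$, stability of the hypotheses under the normalisations, and the small-coefficient cases) is routine. A more classical alternative would replace the lattice by a Fermat-style descent lowering $|abc|$ by trading the coefficient of largest absolute value for the squarefree part of $(t^2+uv)$ divided by that coefficient, where $u,v$ are the other two coefficients and $t^2\equiv-uv$ modulo it; this avoids geometry of numbers but demands comparably careful bookkeeping to see that $|abc|$ really decreases and that the residue conditions are inherited.
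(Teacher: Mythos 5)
Your proof is correct, and it is worth noting that the paper itself offers no proof of this theorem at all: Theorem~\ref{Legendrethm} is quoted from Legendre (\S 27) and used as a black box, so you are supplying an argument the paper deliberately omits. Your necessity argument (primitivity, then inverting $z$ modulo the squarefree $a$ to get $(byz^{-1})^2\equiv -bc$, plus the trivial sign remark) is the standard one and is sound. For sufficiency you take the modern geometry-of-numbers route: the congruence lattice $\Lambda$ of index $|abc|$ on which $ax^2+by^2+cz^2$ is divisible by $|abc|$ (your factorizations $bf\equiv(by-wz)(by+wz)\bmod a$, $af\equiv(ax-w'z)(ax+w'z)\bmod b$, $af\equiv(ax-w''y)(ax+w''y)\bmod c$ check out, and the index computation is right since $b$ is invertible mod $a$ and $a$ mod $b,c$), then the box-pigeonhole bound $-2|abc|\le f(v)\le |abc|$, exclusion of the extreme values $|abc|$ and $-2|abc|$ because $|bc|$ and $|ac|$ are squarefree $>1$ hence nonsquares (this is exactly why your prior reduction to ``at most one coefficient of absolute value $1$'' is needed, and you correctly quarantine the degenerate cases, which reduce to Fermat's two-square theorem or to trivial solutions like $(1,1,0)$), and finally the composition identity $(By^2+Cz^2)(x^2+BC)=B(xy+Cz)^2+C(xz-By)^2$ to convert the residual value $f(v)=-|abc|$ into a genuine nontrivial zero; I verified the identity and the nontriviality ($X=x^2+BC>0$). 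By contrast, Legendre's own argument (the one the paper implicitly relies on) proceeds by reduction/descent on ternary forms, trading the largest coefficient for a smaller one while propagating the residue conditions—essentially the alternative you sketch in your last sentences. Your route buys a short, verifiable existence proof at the cost of invoking the two-square theorem in the degenerate cases; the descent route is closer to what Legendre (and the paper's historical narrative) actually uses but requires the bookkeeping you describe to show the coefficients strictly decrease.
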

$\hfill \blacksquare$

\begin{cor} 
\label{Legendrecor}
Using the Legendre symbol, if \eqref{LE} has no solutions, and if $a$, $b$, $c$ be primes or $1$, then at least one of the following equations must hold:
$$
\left(-\frac{bc}{a}\right)=-1\quad \text{or},\left(-\frac{ca}{b}\right)=-1 \quad\text{or},\left(-\frac{ab}{c}\right)=-1 .
$$

\end{cor}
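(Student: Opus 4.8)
The plan is to obtain Corollary~\ref{Legendrecor} as the contrapositive of the sufficiency half of Theorem~\ref{Legendrethm}. Suppose $a$, $b$, $c$ are pairwise coprime, each equal to a prime or to $1$ (hence squarefree), and suppose in addition---as holds whenever the corollary is invoked in the reciprocity argument---that the sign condition of Theorem~\ref{Legendrethm} is met, that is, one of $a$, $b$, $c$ is positive and one is negative. If \emph{none} of the three displayed equations held, then $-bc$, $-ca$, $-ab$ would be quadratic residues of $a$, $b$, $c$ respectively; together with the sign condition this makes all four hypotheses of Theorem~\ref{Legendrethm} true, so \eqref{LE} would have an integer solution, contrary to assumption. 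Hence at least one of the three equations must hold.

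The point that needs care is the dictionary between ``$-bc$ is a quadratic residue of $a$'' in Theorem~\ref{Legendrethm} and the value of the Legendre symbol $\left(-\frac{bc}{a}\right)$. Because $a$ is prime and $\gcd(a,b)=\gcd(a,c)=1$, we have $a\nmid bc$, so the symbol $\left(\frac{-bc}{a}\right)$ takes the value $+1$ or $-1$ and never $0$; thus it equals $-1$ exactly when $-bc$ is a non-residue of $a$, and similarly for the other two symbols. Next, by the multiplicativity of the Legendre symbol together with Euler's formula $\left(\frac{-1}{p}\right)=(-1)^{\frac{p-1}{2}}$, the quantity $\left(-\frac{bc}{a}\right)$ in the statement is to be read as $\left(\frac{-1}{a}\right)\left(\frac{b}{a}\right)\left(\frac{c}{a}\right)$, which is the form in which these conditions are used later. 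Finally, if one of $a$, $b$, $c$ equals $1$, say $c=1$, then $\left(-\frac{ab}{c}\right)=1$ by the usual convention, so that equation can never be the one that holds and may simply be dropped.

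Accordingly I would write the proof in three short steps: (i) apply Theorem~\ref{Legendrethm} in contrapositive form, under the standing sign hypothesis, to conclude that at least one of $-bc$, $-ca$, $-ab$ is a non-residue of the respective modulus; (ii) observe that the relevant Legendre symbol is nonzero, so ``non-residue'' coincides with the symbol having value $-1$; (iii) expand that symbol by multiplicativity and Euler's formula to match the notation of the statement. All the substance lies in Theorem~\ref{Legendrethm}, which is already granted, so I expect no real obstacle; the only things to watch are the implicit sign hypothesis, the degenerate entries equal to $1$, and---should one wish to allow $2$ among $a$, $b$, $c$---the convention for the Legendre symbol modulo $2$, which in any case does not intervene in the reciprocity application, where the primes in play are odd.
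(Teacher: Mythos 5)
Your proposal is correct and is exactly the intended derivation: the paper states Corollary~\ref{Legendrecor} without proof, treating it as the immediate contrapositive of the sufficiency half of Theorem~\ref{Legendrethm}, which is precisely your step (i), with steps (ii) and (iii) supplying the routine dictionary between ``non-residue'' and the Legendre symbol value $-1$. Your explicit flagging of the sign hypothesis (one coefficient positive, one negative, as indeed holds in every application in Cases I--VIII) and of the degenerate entries equal to $1$ is a sensible tightening of the statement rather than a divergence from the paper's route.
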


$\hfill \blacksquare$

Legendre's proof of the quadratic reciprocity law also uses the following simple lemma .
\begin{lemma}
\label{Legendrecor2}
 If the integers $a,b,c$ are all $\equiv 1\pmod 4$, then the equation
$$ax^2+by^2+cz^2=0$$
is not solvable in integers.  
\end{lemma}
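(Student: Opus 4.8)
The plan is to argue by contradiction using a congruence modulo $4$; note first that the hypotheses of Theorem~\ref{Legendrethm} (pairwise coprimality, squarefreeness) are \emph{not} assumed here, so the argument will have to be entirely elementary. Suppose the equation $ax^2+by^2+cz^2=0$ has an integer solution $(x,y,z)\neq(0,0,0)$. Since the form is homogeneous, I may divide through by $\gcd(x,y,z)$ and thus assume $\gcd(x,y,z)=1$; in particular $x,y,z$ are not all even.

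Next I would reduce the equation modulo $4$. Because $a\equiv b\equiv c\equiv 1\pmod 4$, we have $ax^2+by^2+cz^2\equiv x^2+y^2+z^2\pmod 4$, and hence
$$x^2+y^2+z^2\equiv 0\pmod 4.$$
A square is $\equiv 0\pmod 4$ when its base is even and $\equiv 1\pmod 4$ when its base is odd; so if $k$ denotes the number of odd entries among $x,y,z$, the left-hand side is $\equiv k\pmod 4$ with $0\le k\le 3$. The displayed congruence then forces $k=0$, i.e.\ $x$, $y$, and $z$ are all even, contradicting $\gcd(x,y,z)=1$. Therefore no nontrivial solution exists.

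I do not expect any genuine obstacle here. The only points that merit a moment's care are that ``solvable in integers'' must be understood as ``possessing a solution not identically zero'' — the triple $(0,0,0)$ always satisfies the equation — and that the passage to a primitive triple is legitimate precisely because the form is homogeneous of degree $2$. It is also worth remarking that working modulo $4$ already suffices: one does not need the sharper fact that an odd square is $\equiv 1\pmod 8$, because all three coefficients are themselves $\equiv 1\pmod 4$.
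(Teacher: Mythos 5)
Your proof is correct and is essentially the paper's own argument: reduce modulo $4$ (where each $a,b,c\equiv 1$), observe the sum is congruent to the number of odd entries among $x,y,z$, and rule out the all-even case by passing to a primitive triple via homogeneity (the paper divides out the highest power of $2$, you divide by $\gcd(x,y,z)$ — the same device). Your remark that modulo $4$ already suffices, without the sharper fact that odd squares are $\equiv 1\pmod 8$ which the paper invokes, is a minor but accurate simplification.
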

\begin{proof}
For, since the square of an odd number is of the form $8n+1$, it is easy to see in this case that the value of $ax^2+by^2+cz^2$ is necessarily either $\equiv 1$, or $\equiv 2$, or $\equiv 3\pmod 4$ unless all the quantities $x,y,z$ are simultaneously even.  But this last possibility is absurd since if we divide the equation by $2^{2\mu}$ where $2^{\mu}$ is the highest power of $2$ dividing $x,y,z$ simultaneously, the new solution $x',y',z'$ will have at least one odd member.
\end{proof}

\section{Pell's equation}

Legendre also uses the following consequence of the theory of Pell's equation~\cite{Legendre} \S49.

\begin{thm}
\label{Pell}
If  $b$, $B$ and $\beta$ are prime numbers of the form $4n+3$ and $a$ is a prime number of the form $4n+1$, then it is always possible to solve one of the six equations
$$aM^2-b\beta N^2=\pm 1,$$
$$bM^2-a\beta N^2=\pm 1,$$
$$\beta M^2-ab N^2=\pm 1.$$
\end{thm}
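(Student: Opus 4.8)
The plan is to deduce this from Legendre's theorem on ternary quadratic forms (Theorem~\ref{Legendrethm}) together with Legendre's Lemma~\ref{LL}, rather than from the continued-fraction theory of Pell's equation directly. The point is that an equation like $aM^2-b\beta N^2=\pm 1$ is, after setting $z$ to absorb the constant, essentially a question about the ternary form $aX^2-b\beta Y^2 - Z^2$ (or $aX^2-b\beta Y^2 \mp Z^2$) representing zero; more precisely, I expect to work with the six forms
$$
aX^2-b\beta Y^2 \pm Z^2,\qquad bX^2-a\beta Y^2\pm Z^2,\qquad \beta X^2-abY^2\pm Z^2,
$$
and to show that at least one of them has a nontrivial integral zero. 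A nontrivial zero of, say, $aX^2-b\beta Y^2 - Z^2=0$ with $\gcd$ conditions can then be massaged (clearing common factors, using that $a,b,\beta$ are distinct primes) into an actual solution of one of the six Pell equations with $+1$ or $-1$ on the right-hand side; this last massaging is the descent/normalization step and is routine given primality.

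First I would record the residue hypotheses we may assume. Suppose, for contradiction, that none of the six Pell equations is solvable. Applying Corollary~\ref{Legendrecor} to each of the six ternary forms above (the coefficient triples are $(a,-b\beta,\pm1)$, $(b,-a\beta,\pm1)$, $(\beta,-ab,\pm1)$, all pairwise coprime and squarefree since $a,b,\beta$ are distinct primes $\equiv 1$ or $3 \bmod 4$), nonsolvability forces a Legendre-symbol obstruction for each. The sign conditions (one coefficient positive, one negative) are automatically satisfied by the $\pm Z^2$ choice, so the obstruction must come from the residue conditions: for each form, one of the three relevant Legendre symbols equals $-1$. Unwinding these twelve potential obstructions using multiplicativity of the Legendre symbol and Euler's formula $\left(\frac{-1}{p}\right)=(-1)^{(p-1)/2}$ — so that $\left(\frac{-1}{b}\right)=\left(\frac{-1}{\beta}\right)=-1$ while $\left(\frac{-1}{a}\right)=+1$ — I would reduce everything to statements about the six symbols $\left(\frac{a}{b}\right)$, $\left(\frac{b}{a}\right)$, $\left(\frac{a}{\beta}\right)$, $\left(\frac{\beta}{a}\right)$, $\left(\frac{b}{\beta}\right)$, $\left(\frac{\beta}{b}\right)$, plus $\left(\frac{-1}{\cdot}\right)$ factors.

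The crux is then a purely combinatorial argument on signs: I want to show that the system of $-1$'s forced by nonsolvability of all six equations is contradictory. Here is where Legendre's Lemma~\ref{LL} enters and does the real work: the lemma guarantees that for the prime $a\equiv 1\bmod 4$ there exists \emph{some} prime $q\equiv 3\bmod 4$ with $\left(\frac{a}{q}\right)=-1$; applied within the closed world of the primes at hand (or by choosing $b$, $B$, $\beta$ appropriately as the three primes $\equiv 3\bmod 4$ in play — this is exactly why the theorem is stated with three such primes), it pins down enough of the symbols to collapse the case analysis. The main obstacle I anticipate is precisely this bookkeeping: organizing the $2^{6}$-ish sign patterns so that the contradiction is visible without an unilluminating brute-force table, and making sure the quantifier in Legendre's Lemma is applied to a prime that actually occurs among $b,B,\beta$ rather than to an extraneous one. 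I would handle this by first using the product formula $\left(\frac{a}{b}\right)\left(\frac{b}{a}\right)=1$ for $a\equiv1$ (a consequence already available from what can be proved unconditionally, via Theorem~\ref{Legendrethm} applied to $aX^2-bY^2-Z^2$ and its partner) to halve the number of independent symbols, and then checking that the remaining handful of cases each violate Lemma~\ref{LL} for one of $b,B,\beta$. Finally, from whichever ternary form survives I extract the nontrivial zero, clear denominators and common factors, and read off the desired $\pm1$ Pell solution, completing the proof.
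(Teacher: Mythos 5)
Your proposal has a genuine gap, and it sits exactly where you declare the step ``routine.'' A nontrivial zero of, say, $aX^2-b\beta Y^2-Z^2$ only says that the binary form $aM^2-b\beta N^2$ represents \emph{some} square $Z^2$; representing $\pm 1$ is a much stronger statement (it says the form $(a,0,-b\beta)$ is equivalent to a principal form, whereas solvability of the ternary equation amounts only to the congruence conditions of Theorem~\ref{Legendrethm}, i.e.\ a genus-type condition), and no clearing of common factors converts the one into the other. This also breaks the opening move of your contradiction: from ``none of the six Pell equations is solvable'' you cannot conclude that the six ternary forms fail to represent zero, so Corollary~\ref{Legendrecor} cannot even be invoked. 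The paper's proof supplies precisely the ingredient your reduction is missing, and it is the Pell theory itself: take the minimal solution $T,U$ of $T^2-DU^2=1$ with $D$ the product of the given primes (so $D\equiv 1\pmod 4$, $T$ odd, $U=2U_1$), write $\frac{T+1}{2}\cdot\frac{T-1}{2}=DU_1^2$ with the two factors coprime, and distribute the prime divisors of $D$ between the factors; each distribution yields an equation $rM^2-sN^2=\pm 1$ with $rs=D$, the case $r=1$ is excluded by minimality of $(T,U)$, the case forcing $-1$ to be a quadratic residue of a prime $\equiv 3\pmod 4$ is impossible, and the surviving cases are exactly the asserted list. The fundamental solution is what produces the $\pm 1$; Legendre's ternary theorem never can.

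Separately, your use of Lemma~\ref{LL} here is both unusable and structurally wrong. The lemma only provides \emph{some} auxiliary prime $q\equiv 3\pmod 4$ with $\left(\frac{a}{q}\right)=-1$; it says nothing about the arbitrary, given primes $b,B,\beta$, so it cannot ``pin down'' their symbols, and you acknowledge but do not resolve this quantifier mismatch. Moreover, Theorem~\ref{Pell} is used in Cases III and IV of the reciprocity proof, which are independent of the lemma, and the lemma is exactly the unproved gap the whole paper is devoted to filling; the paper's proof of Theorem~\ref{Pell} needs no hypothesis beyond the elementary fact that $-1$ is a nonresidue of every prime $\equiv 3\pmod 4$, so importing the lemma here would entangle the paper's logical structure for no gain.
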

\begin{proof}

It is well known that for every arbitrary positive integer $D$ the equation
$$x^2-Dy^2=1$$
is always solvable in distinct non zero integers $x,y$.  Now, if $T$ and $U$ are the smallest positive solutions among them then
$$T^2-DU^2=1$$
or
$$(T-1)(T+1)=DU^2.$$
Moroever if $D\equiv 1\pmod 4$, it is easy to see that $T$ must be odd and $U$ even so that we can put
$$\frac{T+1}{2}\cdot\frac{T-1}{2}=DU_1^2$$
where $U=2U_1$ and the two factors on the left must be relatively prime since their difference is $1$.

Now, if $D=b\cdot B$ where $b$ and $B$ are two prime numbers of the form $4n+3$, and if the factorization of $U_1$ into two relatively prime factors is given generally by
$$U_1=M\cdot N,$$
then it follows from the equation
$$\frac{T+1}{2}\cdot\frac{T-1}{2}=b\cdot B\cdot U_1^2$$
that all possible factorizations are given by the following four cases:
\begin{align*}
\label{}
    1.\quad \frac{T+1}{2}& =M^2&\frac{T-1}{2}&=b\cdot B\cdot N^2  \\
    2.\quad \frac{T+1}{2}& =b\cdot B\cdot M^2&\frac{T-1}{2}&= N^2  \\
    3.\quad \frac{T+1}{2}& =b\cdot M^2&\frac{T-1}{2}&= B\cdot N^2  \\
    4.\quad \frac{T+1}{2}& =B\cdot M^2&\frac{T-1}{2}&=b\cdot N^2  \\
\end{align*}

Of these Case $1$ is impossible because otherwise
$$M^2-b\cdot B\cdot N^2=1$$
which contradicts the minimality of the solution $T$ and $U$.

Similarly, Case $2$ is impossible because otherwise
$$1=b\cdot BM^2-N^2$$
and $-1$ would be a quadratic residue of a prime number of the form $4n+3.$

Two cases still remain:
$$1=bM^2-B\cdot N^2$$
and
$$1=BM^2-b\cdot N^2$$
one of which must occur.

\end{proof}


\section{Legendre's proof of the law of quadratic reciprocity}

After these preparations we can now briefly sketch Legendre's proof of the law of quadratic reciprocity.  

If we retain the earlier notation, so that $a,A$ are prime numbers of the form $4n+1$ and $b,B$ are prime numbers of the form $4n+3$, then the theorem splits into the following eight cases, where $ \left(\frac{a}{b}\right)$ is the well known Legendre symbol ~\cite{Legendre} \S166.

\begin{thm}
\label{QRT}
\begin{align*}
\label{}
    I.& \left(\frac{a}{b}\right)=-1&\Rightarrow&&  \left(\frac{b}{a}\right)=-1\\
    II.& \left(\frac{b}{a}\right)=+1&\Rightarrow&&  \left(\frac{a}{b}\right)=+1\\
     III.& \left(\frac{B}{b}\right)=+1&\Rightarrow&&  \left(\frac{b}{B}\right)=-1\\
      IV.& \left(\frac{B}{b}\right)=-1&\Rightarrow&&  \left(\frac{b}{B}\right)=+1\\
       V.& \left(\frac{a}{A}\right)=+1&\Rightarrow&&  \left(\frac{A}{a}\right)=+1\\
        VI.& \left(\frac{a}{A}\right)=-1&\Rightarrow&&  \left(\frac{A}{a}\right)=-1\\
         VII.& \left(\frac{a}{b}\right)=+1&\Rightarrow&&  \left(\frac{b}{a}\right)=+1\\
         VIII.& \left(\frac{b}{a}\right)=-1&\Rightarrow&&  \left(\frac{a}{b}\right)=-1\\
    &  
\end{align*}
\end{thm}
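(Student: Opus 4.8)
The plan is to reduce the eight implications to five --- $I\Leftrightarrow II$ and $VII\Leftrightarrow VIII$ are contrapositive pairs, and $V\Leftrightarrow VI$ is a contrapositive pair after also interchanging $a$ and $A$ --- and to dispatch each of $I,III,IV,V,VII$ by one of two devices. The \emph{form device}: given the hypothesis of a case, exhibit a ternary form $px^{2}+qy^{2}+rz^{2}=0$ whose coefficients $p,q,r$ are squarefree, pairwise coprime, of mixed sign, and all $\equiv1\pmod4$. Lemma~\ref{Legendrecor2} then denies it a nontrivial integer solution, so by Theorem~\ref{Legendrethm} at least one of the three residue conditions ``$-qr$ is a residue of $p$'', ``$-rp$ is a residue of $q$'', ``$-pq$ is a residue of $r$'' must fail; the form is chosen so that the negation of the desired conclusion would instead force all three to hold, a contradiction. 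The \emph{Pell device} uses Theorem~\ref{Pell} (and, for Case $IV$, the simpler instance $D=bB$ worked out in its proof): from a solvable Pell-type equation one extracts a Legendre symbol by reducing modulo one of the primes present. Legendre's Lemma~\ref{LL} enters in Cases $V$ and $VII$ to supply an auxiliary prime $\equiv3\pmod4$. Throughout I use multiplicativity of the symbol and $\left(\frac{-1}{a}\right)=+1$ for $a\equiv1\pmod4$, $\left(\frac{-1}{b}\right)=-1$ for $b\equiv3\pmod4$.

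\emph{Case $I$.} Take $x^{2}+ay^{2}-bz^{2}=0$, coefficients $1,a,-b$, all $\equiv1\pmod4$. The ``residue of $1$'' condition is vacuous, and $\left(\frac{-a}{b}\right)=\left(\frac{-1}{b}\right)\left(\frac{a}{b}\right)=-\left(\frac{a}{b}\right)=+1$ by hypothesis, so ``$-a$ is a residue of $b$'' holds; hence ``$b$ is a residue of $a$'' fails, i.e. $\left(\frac{b}{a}\right)=-1$. \emph{Case $III$.} Take $x^{2}-by^{2}-Bz^{2}=0$, coefficients $1,-b,-B$, all $\equiv1\pmod4$; solubility would need both $\left(\frac{B}{b}\right)=+1$ and $\left(\frac{b}{B}\right)=+1$, so these cannot both hold. \emph{Case $IV$.} The only admissible form in $b,B$ alone is the one used for $III$, which does not help; instead, by the instance $D=bB$ in the proof of Theorem~\ref{Pell}, one of $bM^{2}-BN^{2}=1$, $BM^{2}-bN^{2}=1$ is solvable in positive integers, and reducing the first modulo $B$ and the second modulo $b$ (the modulus cannot divide $M$, else it would divide $1$) gives $\left(\frac{b}{B}\right)=+1$, resp. $\left(\frac{B}{b}\right)=+1$, so these symbols are not both $-1$ --- which together with $III$ is $IV$. (With Case $VII$ below, Cases $I$--$IV$ then yield $\left(\frac{a}{b}\right)=\left(\frac{b}{a}\right)$ for all such $a,b$.)

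\emph{Case $V$.} Assume $\left(\frac{a}{A}\right)=+1$ and, for contradiction, $\left(\frac{A}{a}\right)=-1$. By Lemma~\ref{LL} choose a prime $b\equiv3\pmod4$ with $\left(\frac{a}{b}\right)=-1$; then $\left(\frac{b}{a}\right)=-1$ by Case $I$. The form $x^{2}+ay^{2}-bAz^{2}=0$ has coefficients $1,a,-bA$, all $\equiv1\pmod4$ and pairwise coprime, hence no nontrivial solution by Lemma~\ref{Legendrecor2}. But ``$bA$ is a residue of $a$'' holds, being $\left(\frac{b}{a}\right)\left(\frac{A}{a}\right)=(-1)(-1)=+1$; ``$-a$ is a residue of $bA$'' holds, since $\left(\frac{-a}{b}\right)=\left(\frac{-1}{b}\right)\left(\frac{a}{b}\right)=+1$ and $\left(\frac{-a}{A}\right)=\left(\frac{-1}{A}\right)\left(\frac{a}{A}\right)=+1$; and ``residue of $1$'' is vacuous. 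So Theorem~\ref{Legendrethm} makes the form soluble --- contradiction; hence $\left(\frac{A}{a}\right)=+1$.

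\emph{Case $VII$} (the hard one, where Legendre's own proof stalled). Assume $\left(\frac{a}{b}\right)=+1$ and, for contradiction, $\left(\frac{b}{a}\right)=-1$. By Lemma~\ref{LL} fix a prime $\beta\equiv3\pmod4$ with $\left(\frac{a}{\beta}\right)=-1$ (so $\beta\neq b$), whence $\left(\frac{\beta}{a}\right)=-1$ by Case $I$. Apply Theorem~\ref{Pell} to $a,b,\beta$: one of $aM^{2}-b\beta N^{2}=\pm1$, $bM^{2}-a\beta N^{2}=\pm1$, $\beta M^{2}-abN^{2}=\pm1$ is solvable in positive integers. Now eliminate all six (in each reduction the modulus cannot divide $M$): $aM^{2}-b\beta N^{2}=+1$ modulo $\beta$ forces $\left(\frac{a}{\beta}\right)=+1$; $aM^{2}-b\beta N^{2}=-1$ modulo $b$ forces $\left(\frac{a}{b}\right)=\left(\frac{-1}{b}\right)=-1$; $bM^{2}-a\beta N^{2}=+1$ modulo $a$ forces $\left(\frac{b}{a}\right)=+1$; $bM^{2}-a\beta N^{2}=-1$ modulo $a$ forces $\left(\frac{b}{a}\right)=\left(\frac{-1}{a}\right)=+1$; and $\beta M^{2}-abN^{2}=\pm1$ modulo $a$ force $\left(\frac{\beta}{a}\right)=+1$. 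Each contradicts one of the standing facts $\left(\frac{a}{\beta}\right)=-1$, $\left(\frac{a}{b}\right)=+1$, $\left(\frac{b}{a}\right)=-1$, $\left(\frac{\beta}{a}\right)=-1$, so the conclusion of Theorem~\ref{Pell} is violated; hence $\left(\frac{b}{a}\right)=+1$, which is $VII$ ($VIII$ is its contrapositive). The main obstacle, and the historical gap, is the last pair $\beta M^{2}-abN^{2}=\pm1$: the hypotheses on $a$ and $b$ alone cannot exclude them, and it is precisely Legendre's Lemma, supplying a prime $\beta$ with $\left(\frac{\beta}{a}\right)=-1$, that does.
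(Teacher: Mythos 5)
Your proposal is correct and takes essentially the same route as the paper: the unsolvable forms $x^{2}+ay^{2}-bz^{2}$ and $x^{2}+ay^{2}-\beta Az^{2}$ combined with Theorem~\ref{Legendrethm} and Lemma~\ref{Legendrecor2}, the Pell alternatives of Theorem~\ref{Pell} for Cases $III$, $IV$, $VII$, and Legendre's Lemma~\ref{LL} to supply the auxiliary prime $\equiv 3\pmod 4$ in Cases $V$ and $VII$. The only cosmetic deviations are that you dispatch Case $III$ with the form $x^{2}-by^{2}-Bz^{2}$ where the paper uses the Pell alternative, and you arrange Cases $V$ and $VII$ as arguments by contradiction (invoking the sufficiency direction of Theorem~\ref{Legendrethm} in $V$) where the paper argues directly by eliminating conditions; these are logically equivalent.
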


\begin{proof}

The idea of Legendre's proof is the following.  One forms a suitable equation \eqref{LE} in which the coefficients  are all $\equiv 1\pmod 4$,  which, by  lemma~\ref{Legendrecor2}, \emph{cannot} be solved, and uses Corollary~\ref{Legendrecor}  and the properties of the Legendre symbol to show that the left hand side of the implication (I)-(VIII) must therefore hold.
\\
\\
\fbox{\emph{Cases $I$ and $II$:}}
 \\
 \\
 We proceed from the \emph{impossibility} of the equation
$$x^2+ay^2-bz^2=0$$
Lemma 2.3 implies that we may not have, simultaneously,
$$ \left(\frac{-a}{b}\right)=+1\quad\text{and}\quad  \left(\frac{b}{a}\right)=+1.$$
Therefore, if
$$ \left(\frac{-a}{b}\right)=+1\quad\text{that is}\quad \left(\frac{a}{b}\right)=-1,$$
since the Legendre symbol can only have the values$+1$ and $-1$, it must follow that $ \left(\frac{b}{a}\right)=-1$, and if
$$ \left(\frac{b}{a}\right)=+1,$$
it must follow that $ \left(\frac{a}{b}\right)=+1.$
\\
\\
\fbox{\emph{Cases $III$ and $IV$:}}
 \\
 \\
 By the Theorem~\ref{Pell}, one of the two equations
$$bM^2-Bn^2=1$$
$$BM^2-bN^2=1$$
is satisfied.

However, if $ \left(\frac{B}{b}\right)=+1$, the equation $bM^2-BN^2=1$ cannot hold since otherwise $ \left(\frac{-B}{b}\right)=+1$ or  $\left(\frac{B}{b}\right)=-1$ contrary to the hypothesis.  Therefore the second equation holds, from which it follows that $ \left(\frac{-b}{B}\right)=+1$ or $ \left(\frac{b}{B}\right)=-1.$

On the other hand, if $ \left(\frac{B}{b}\right)=-1$, then we must exclude the second equation and the now valid first equation implies $\left(\frac{b}{B}\right)=+1.$
\\
\\
\fbox{\emph{Cases $V$ and $VI$:}}
\\
\\
\textbf{\emph{ Choose a prime number $\beta$ of the form $4n+3$ such that $\left(\frac{a}{\beta}\right)=-1.$}}  Then, by case $I$ we must have $\left(\frac{\beta}{a}\right)=-1.$  If we now consider the unsolvable equation
$$x^2+ay^2-A\beta z^2=0$$
we immediately see that we cannot have simultaneously that $(-a)$ is a residue of $(A\beta)$ and $(A\beta)$ a residue of $a$.

However, the first condition is fulfilled in case $V$ because then $ \left(\frac{-a}{A}\right)=+1$ and by the assumption about $\beta$ also $ \left(\frac{-a}{\beta}\right)=+1$ so that $(-a)$ must be a residue of $(A\beta)$.  Accordingly the second condition cannot also be fulfilled, i.e., we must have $ \left(\frac{\beta A}{a}\right)=-1$ or, since $\left(\frac{\beta}{a}\right)=-1$, also $ \left(\frac{A}{a}\right)=+1.$

Case $VI$ can be reduced to Case $V$.  For, if we could deduce the condition $ \left(\frac{A}{a}\right)=+1$ from the equation $ \left(\frac{a}{A}\right)=-1$, then the case just proved would imply $ \left(\frac{a}{A}\right)=+1$, contrary to the hypothesis.
\\
\\
\fbox{\emph{Cases $VII$ and $VIII$:,} }
\\
\\
Here therefore $ \left(\frac{a}{b}\right)=+1,$ we again as above \textbf{\emph{choose a prime number $\beta$ of the form $4n+3$ so that $ \left(\frac{a}{\beta}\right)=-1$ }}and therefore also
$ \left(\frac{\beta}{a}\right)=-1.$  Then, by the theory of the Pell equation, one can assume that one of the following six equations is fulfilled
$$\pm 1=aM^2-b\beta N^2,$$
$$\pm 1=bM^2-a\beta N^2,$$
$$\pm 1=\beta M^2-ab N^2.$$
for a suitable choice of sign on the left hand side.

Recalling that 
$$\left(\frac{a}{b}\right)=+1,\left(\frac{a}{\beta}\right)=-1,\left(\frac{\beta}{a}\right)=-1,$$
we conclude that the following equations are impossible
$$1.\quad +1=\beta M^2-ab N^2\quad\text{which assumes}\quad \left(\frac{\beta}{a}\right)=+1, $$
$$2.\quad -1=\beta M^2-ab N^2\quad\text{which assumes}\quad \left(\frac{\beta}{a}\right)=+1, $$
$$3.\quad -1=aM^2-b\beta N^2\quad\text{which assumes}\quad \left(\frac{a}{b}\right)=-1, $$
$$4.\quad +1=aM^2-b\beta N^2\quad\text{which assumes}\quad \left(\frac{a}{\beta}\right)=+1.$$

Therefore, only two equations remain
$$+1=bM^2-a\beta N^2$$
$$-1=bM^2-a\beta N^2$$
each of which, whichever one holds, requires the relation $\left(\frac{b}{a}\right)=+1.$  This proves Case (VII).

If  $ \left(\frac{b}{a}\right)=-1,$ then we must have $ \left(\frac{a}{b}\right)=-1,$ since if $ \left(\frac{a}{b}\right)=+1,$ we would have $ \left(\frac{b}{a}\right)=+1,$  which is a contradiction.  This proves Case (VIII).
\end{proof}


\section{The gap in the proof}

Again we state the fundamental gap in Legendre's proof, already mentioned in the introduction, and which yet has to be filled.  Namely, that \emph{given a prime number $a$ of the form $4n+1$ it is always possible to find a prime number $\beta$ of the form $4n+3$, of which $a$ is a quadratic nonresidue. } It is easy to prove the existence of this auxiliary prime for prime numbers of the form $8n+5$ since then $1+a$ is of the form $8n+6$, therefore divisible by a prime of the form $4n+3$, of which $a$ is then $a$ is a nonresidue.  But the proof for the case $a=8n+1$ is profoundly more difficult.  

Some forty years after Legendre,  Dirichlet  introduced brilliant new methods into number theory in his famous determination of the \emph{class number }of binary quadratic forms.  On the basis of these investigations one \emph{can} provide the necessary proof to complete Legendre's demonstration.

Nevertheless, almost 125 years passed until (1923) when finally the German mathematician H. Teege posthumously published the first rigorous proof of Legendre's lemma~\cite{Teege}.  He based his proof on the (easy part (!)) of the Dirichlet class number formulas mentioned above, and was able to obtain the following much stronger result.
\begin{thm} (Teege's Theorem)
\label{Teege}
It is always possible to find not only one, but infinitely many prime numbers $\beta$ of the form $4n+3$ so that a given prime number $a$ of the form $8n+1$ is a quadratic nonresidue of them, and that there is always at least one of them smaller than $a$.
\end{thm}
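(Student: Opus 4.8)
The plan is to translate the statement into a positivity assertion about a short Kronecker--symbol sum and then to extract that positivity from the elementary (``finite--sum'') form of Dirichlet's class number formula for the imaginary quadratic discriminant $-4a$. \emph{Step 1 (reformulation without reciprocity).} Since $a\equiv 1\pmod 4$ the integer $-4a$ is a fundamental discriminant; let $\chi(m):=\left(\frac{-4a}{m}\right)$ be the associated Kronecker symbol, a real character modulo $4a$. It is odd, vanishes on even $m$, and for odd $m$ one has $\chi(m)=\left(\frac{-1}{m}\right)\left(\frac{a}{m}\right)$ (Jacobi symbols). For a prime $\beta\equiv 3\pmod 4$ with $\beta\neq a$, Euler's formula $\left(\frac{-1}{\beta}\right)=(-1)^{(\beta-1)/2}=-1$ --- which is on our list of admissible ingredients --- gives $\chi(\beta)=-\left(\frac{a}{\beta}\right)$. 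Hence ``$a$ is a quadratic nonresidue of $\beta$'' is \emph{equivalent} to ``$\chi(\beta)=+1$'', and no use of quadratic reciprocity is made. It therefore suffices to prove: (i) there is a prime $\beta\equiv 3\pmod 4$ with $\beta<a$ and $\chi(\beta)=+1$; and (ii) there are infinitely many primes $\beta\equiv 3\pmod 4$ with $\chi(\beta)=+1$.

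\emph{Step 2 (deducing (ii) from (i)).} The Jacobi symbol $\left(\frac{a}{\cdot}\right)$, hence $\chi$, is periodic modulo $4a$. So if $\beta_0$ is a prime as in (i), then every prime $p\equiv\beta_0\pmod{4a}$ satisfies $p\equiv 3\pmod 4$ and $\chi(p)=\chi(\beta_0)=+1$; since $\gcd(\beta_0,4a)=1$, there are infinitely many such $p$ by Dirichlet's theorem on primes in arithmetic progressions. (One could replace Dirichlet's theorem by a Euclid--type argument on the product of a supposed finite list of these primes, but invoking it is shortest.)

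\emph{Step 3 (proving (i) --- the core).} Here we use the \emph{easy part} of the class number formula: for $a\equiv 1\pmod 8$ (so $-4a<-4$) the number $h(-4a)$ of classes of binary quadratic forms of discriminant $-4a$ satisfies
$$h(-4a)=\frac12\sum_{0<m<2a}\chi(m),\qquad h(-4a)\geq 1,$$
the lower bound being witnessed by the principal form $x^2+ay^2$; hence $\sum_{0<m<2a}\chi(m)\geq 2$. Now suppose, for contradiction, that $a$ is a quadratic residue of \emph{every} prime $\beta\equiv 3\pmod 4$ with $\beta<a$, i.e.\ $\chi(\beta)=-1$ for all such $\beta$. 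Because $\chi$ is completely multiplicative, this pins down $\chi(m)$ for all odd $m<a$ in terms of the primes $\equiv 1\pmod 4$ dividing $m$: writing $m=m_1m_3$ with $m_1$ (respectively $m_3$) built from the primes $\equiv 1$ (respectively $\equiv 3$) $\pmod 4$, one gets $\chi(m)=\left(\frac{a}{m_1}\right)\left(\frac{-1}{m_3}\right)$. The goal is then to combine this with complete multiplicativity and a folding of the summation range $(0,2a)$ --- using the oddness of $\chi$ and its behaviour at $2$ --- to conclude $\sum_{0<m<2a}\chi(m)\leq 0$, contradicting $h(-4a)\geq 1$. This contradiction forces some prime $\beta\equiv 3\pmod 4$ with $\beta<a$ to have $\chi(\beta)=+1$, which is (i).

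\emph{The main obstacle.} Everything outside the final manipulation in Step~3 is routine. That manipulation is the heart of Teege's theorem: one must show that the contributions of the primes $q\equiv 1\pmod 4$ --- about which the standing hypothesis says \emph{nothing} --- cannot prevent the finite sum from being nonpositive, and one must sharpen the natural range $0<m<2a$ of the class number sum down to $0<m<a$ in order to obtain the bound $\beta<a$. It is precisely the hypothesis $a\equiv 1\pmod 8$ that makes this case genuinely require the class number input: for $a\equiv 5\pmod 8$ the elementary argument noted earlier in this section ($1+a$ has a prime factor $\equiv 3\pmod 4$, of which $a\equiv-1$ is a nonresidue) already suffices.
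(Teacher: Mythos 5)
Your Step 3 is not a proof but a statement of intent, and what it leaves out is precisely the theorem. After reducing matters to the inequality $\sum_{0<m<2a}\chi(m)\leq 0$ under the hypothesis that $\chi(\beta)=-1$ for every prime $\beta\equiv 3\pmod 4$ with $\beta<a$, you write that ``the goal is then'' to obtain this by multiplicativity and a folding of the range, and you yourself label this ``the main obstacle.'' No such manipulation is supplied, and it is not at all clear one exists along these lines: the hypothesis constrains neither the primes $q\equiv 1\pmod 4$ nor the primes $\equiv 3\pmod 4$ lying in $(a,2a)$, both of which contribute to the sum, and the promised sharpening of the range from $2a$ to $a$ (needed to get $\beta<a$) is likewise only announced. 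So part (i), on which everything else in your argument rests, is unproved. There is a second, structural problem: the ingredients you treat as free are quietly equivalent to quadratic reciprocity. That $m\mapsto\left(\frac{a}{m}\right)$ (hence $\chi$) is periodic modulo $4a$, i.e.\ that $\chi$ is a Dirichlet character, is a standard reformulation of the reciprocity law, and the finite-sum formula $h(-4a)=\tfrac12\sum_{0<m<2a}\chi(m)$ is obtained from the form-theoretic count exactly by the passage to $L$-series that, as noted in Section 9, Dirichlet carried out \emph{using} quadratic reciprocity and that this paper must avoid. Your Step 2 also leans on that periodicity (to know $\chi(p)=\chi(\beta_0)$ along the progression), so the claim that ``no use of quadratic reciprocity is made'' is unsubstantiated, and in the context of completing Legendre's proof the argument would be circular.

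For contrast, the paper's route is quite different. Existence (and infinitude) of the primes $\beta$ comes from an analytic contradiction: if $p$ were a residue of all (or all but finitely many) primes $\equiv 3\pmod 4$, then in the quotient of the two fundamental identities the surviving product $\prod\frac{1+1/B^{s}}{1-1/B^{s}}$ would run over essentially all primes $B\equiv 3\pmod 4$ and diverge as $s\rightarrow 1$, while Teege's identity makes the left side converge to the finite number $\frac{h(p)\ln(T+U\sqrt{p})}{2\pi\,h(-p)}$; only the ``product'' half of Dirichlet's machinery is used, precisely to sidestep reciprocity. The bound $\beta<a$ is then obtained not from any character-sum estimate but by a separate Fermat descent: from $x^{2}+p=B\,b'$ one shows $b'<B$, extracts a smaller prime $B'\equiv 3\pmod 4$ with $\left(\frac{-p}{B'}\right)=+1$, and repeats until a prime below $p$ appears. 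If you want to salvage your plan, you would need either to carry out the missing sign estimate in Step 3 by hand or to replace Step 3 by an argument of this descent type, and in either case to rederive (or avoid) the character interpretation of $\chi$ without invoking reciprocity.
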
  

We note that Gauss' first proof of the quadratic reciprocity law proved the existence of such a prime number $\beta$ but \emph{without} the condition that it be of the form $4n+3.$

Then, some 48 years later (1971) K. Rogers published a proof~\cite{Rogers} based on a lemma of Selberg that the latter used in his elementary proof of the prime number theorem for arithmetic progressions~\cite{Selberg}.  Just like Teege, Rogers also proved that there are \emph{infinitely many} primes $\beta$ satisfying Legendre's lemma, but not the fact that one can find such a prime  smaller than $a$.

These, apparently, are the only two proofs of Legendre's lemma in the literature.

We will elaborate a version of Teege's proof and sketch the principle part of Rogers' proof.
\section{Binary Quadratic Forms}

We briefly review some facts about binary quadratic forms.  See Landau \cite{Landau} and Mathews \cite{Mathews} for more details.

\begin{defn} A \textbf{binary quadratic form} (BQF) $Q(x,y)$ is a function
$$Q(x,y):=ax^2+bxy+cy^2$$
where $a,b,c$ are integers and will be denoted $(a,b,c)$.  The quantity
$$d:=b^2-4ac$$
is the \textbf{discriminant} and $D:=\dfrac{d}{4}$ is the \textbf{determinant.} The discriminant will always denote a nonsquare integer congruent to $0$ or $1$ modulo 4. It is called a \textbf{fundamental} discriminant.  If $d$ is divisible by $4$ then $b$ is even.
\end{defn}

\begin{defn} We say that a BQF $Q$ \textbf{represents} a number $n$ when there exist integers $x,y$ such that $Q(x,y)=n$.  If $x$ and $y$ are relatively prime, we say that $Q$
\textbf{properly} represents $n$.
\end{defn}

A famous theorem of Lagrange states the following.
\begin{thm}
The forms of given (fundamental) discriminant $d$ fall into classes of mutually \textbf{equivalent} forms under linear substitutions of the form
\begin{equation}
\label{equiv}
x=\alpha x'+\beta y', y=\gamma x'+\delta y'
\end{equation}
with integral coefficients $\alpha,\beta,\gamma,\delta$ satisfying $\alpha\delta-\beta\gamma=1$.
The number of classes, $h(d)$ (called the \textbf{class number}), for a given discriminant, $d$, is \textbf{finite}.
\end{thm}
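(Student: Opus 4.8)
The plan is to prove finiteness by the classical method of \emph{reduction}: one shows that every form of discriminant $d$ is equivalent to a form in a restricted normal shape — a \textbf{reduced} form — and that only finitely many reduced forms can have a prescribed discriminant.

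I would begin with two preliminary observations. The substitutions \eqref{equiv} with $\alpha\delta-\beta\gamma=1$ constitute a group (namely $\mathrm{SL}_2(\mathbb{Z})$), so equivalence of forms is reflexive, symmetric, and transitive, and it therefore partitions the forms of discriminant $d$ into classes. Moreover, substituting \eqref{equiv} into $Q$ shows that the discriminant is unchanged; this is the classical fact that $d$ is an invariant of the action. In particular, since $d$ is by hypothesis not a perfect square, the leading coefficient $a$ of any form of discriminant $d$ is nonzero, for $a=0$ would force $d=b^2$.

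The reduction step comes next. The group $\mathrm{SL}_2(\mathbb{Z})$ is generated by $T\colon(x,y)\mapsto(x+y,y)$ and $S\colon(x,y)\mapsto(-y,x)$, which act on a form $(a,b,c)$ by
$$T\colon(a,b,c)\longmapsto(a,\;b+2a,\;a+b+c),\qquad S\colon(a,b,c)\longmapsto(c,\;-b,\;a).$$
Starting from an arbitrary form, apply a suitable power $T^{k}$ so that the new middle coefficient satisfies $|b+2ka|\le|a|$; if the resulting form then has $|c|<|a|$, apply $S$ to interchange its outer coefficients, which strictly decreases the positive integer $|a|$; repeat. Since the positive integer $|a|$ cannot strictly decrease indefinitely, the procedure terminates, and it stops at a form satisfying $|b|\le|a|\le|c|$.

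It remains to bound such reduced forms. From $d=b^2-4ac$ together with $|b|\le|a|\le|c|$ one checks, treating the cases $d<0$ and $d>0$ separately (when $d<0$ one gets $3a^2\le 4|a||c|-b^2=-d$, while when $d>0$ a reduced form is forced to have $ac<0$, whence $4a^2\le 4|a||c|\le d$), that in every case $|a|$ is bounded by an explicit constant times $\sqrt{|d|}$. Hence $a$ ranges over a finite set, then so does $b$ because $|b|\le|a|$, and finally $c=(b^2-d)/(4a)$ is determined. Thus there are only finitely many reduced forms of discriminant $d$; since every class contains at least one of them, $h(d)$ is finite. The delicate point is the termination and boundary analysis of the reduction algorithm — checking that alternating $T$-powers with $S$ genuinely drives $|a|$ into the range $|a|\le|c|$, and settling the borderline cases $|b|=|a|$ and $|a|=|c|$ at which the process halts. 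One should also keep in mind that, unlike in the positive-definite situation, a reduced indefinite form need not be the unique reduced form in its class; but this does not affect the argument, since only an upper bound on their number is needed.
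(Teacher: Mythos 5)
Your proof is correct: the reduction of an arbitrary form to one with $|b|\le|a|\le|c|$ via the generators $S$ and $T$, the termination argument based on the strict decrease of the positive integer $|a|$ (using that $a,c\neq 0$ because $d$ is not a square), and the case-split bounds $3a^2\le|d|$ for $d<0$ and $4a^2\le d$ for $d>0$ together yield finiteness of the number of classes, and this is precisely the classical Lagrange reduction argument. The paper itself states the theorem without proof, deferring to Landau and Mathews, where essentially this same reduction-theoretic proof is given, so your argument matches the intended one.
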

$\hfill \blacksquare$

We give examples below.


\section{Teege's Identity}

We can now state \textbf{\emph{Teege's identity}} which is the basis of his proof and a byproduct of Dirichlet's researches.
\begin{thm} (Teege's Identity)
\label{TeegeID}
Let $D>2$ be a nonsquare positive integer and $p$ a prime which does not divide $D$.  Then
$$
\boxed{\frac{h(D)\ln(T+U\sqrt{D})}{h(-D)\cdot 2\pi}=\lim_{s\rightarrow 1}\dfrac{\displaystyle\prod_{ \left(\frac{D}{p}\right)=1}\frac{1+\frac{1}{p^s}}{1-\frac{1}{p^s}}}{2\displaystyle\prod_{ \left(\frac{-D}{p}\right)=1}\frac{1+\frac{1}{p^s}}{1-\frac{1}{p^s}}}}
$$
where $h(D)$ and $h(-D)$ are the class numbers of binary quadratic forms of determinant $\pm D$,  $T,U$ is the minimum positive solution of the Pell equation $T^2-DU^2=1$, and $\left(\dfrac{\pm D}{p}\right)$ is the Legendre symbol.

\end{thm}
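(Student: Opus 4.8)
The plan is to derive Teege's identity as the ratio of two Dirichlet class number formulas, one for the positive determinant $D$ and one for the negative determinant $-D$, by writing each $L$-function attached to the Kronecker symbol as an Euler product and then rearranging that product according to the value of the quadratic character. First I would recall the two Dirichlet class number formulas in the forms that arise from his analytic argument: for the indefinite case, $h(D)\ln(T+U\sqrt D) = \sqrt{D}\,L(1,\chi_D)$ up to the standard constant, and for the definite case, $\dfrac{2\pi h(-D)}{w\sqrt{D}} = L(1,\chi_{-D})$, where $w$ is the number of automorphs (which is $2$ for $D>2$, hence the isolated factor of $2$ in the denominator of the boxed formula). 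Dividing one by the other, the $\sqrt{D}$ factors cancel and one is left with $\dfrac{h(D)\ln(T+U\sqrt D)}{2\pi h(-D)} = \dfrac{L(1,\chi_D)}{2\,L(1,\chi_{-D})}$, which already has the shape of the right-hand side; the remaining task is purely to rewrite each $L$-value as the claimed product over primes.

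Next I would handle the Euler product manipulation, which is the analytic heart of the matter. Each $L(s,\chi)=\prod_p (1-\chi(p)p^{-s})^{-1}$; since $\chi(p)\in\{0,+1,-1\}$, the primes split into three sets — those with $\chi(p)=1$, those with $\chi(p)=-1$, and the finitely many dividing the conductor where $\chi(p)=0$. I would multiply the numerator and denominator by the Riemann zeta factor $\prod_p (1-p^{-s})^{-1}$ in a matched way so that for each prime with $\chi(p)=1$ the factor becomes $\dfrac{1}{(1-p^{-s})}\cdot\dfrac{1}{(1-p^{-s})}$... more precisely, I would use the standard trick $\dfrac{L(s,\chi)}{\zeta(s)}$-type rearrangement: write $\dfrac{1+p^{-s}}{1-p^{-s}} = \dfrac{1-p^{-2s}}{(1-p^{-s})^2}$ and observe that $(1-\chi(p)p^{-s})^{-1}(1-p^{-s})^{-1}$ equals $(1-p^{-s})^{-2}$ when $\chi(p)=1$ and equals $(1-p^{-2s})^{-1}$ when $\chi(p)=-1$. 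Carrying this out for both $\chi_D$ and $\chi_{-D}$ and taking the ratio, the contributions from primes where the two characters agree cancel, and since $\chi_D(p)\chi_{-D}(p)=\chi_{-1}(p)=(-1)^{(p-1)/2}$, agreement of sign is governed entirely by $p\bmod 4$; after the dust settles the surviving factors are exactly $\prod_{\chi_D(p)=1}\dfrac{1+p^{-s}}{1-p^{-s}}$ over $2\prod_{\chi_{-D}(p)=1}\dfrac{1+p^{-s}}{1-p^{-s}}$, and I would then pass to the limit $s\to 1^+$ using convergence of the rearranged products (justified since the individual $L$-functions are analytic and nonzero at $s=1$, and the finitely many bad primes contribute bounded factors that I would absorb, or note cancel, as $D\nmid p$ excludes them anyway in the statement).

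The step I expect to be the main obstacle is the rigorous justification of the rearrangement and of interchanging the limit $s\to 1$ with the infinite products: the individual products $\prod_{\chi(p)=1}(1+p^{-s})/(1-p^{-s})$ do \emph{not} converge at $s=1$ (the primes with $\chi(p)=1$ have density $\tfrac12$, so each such product diverges like $\zeta$), and only the \emph{ratio} has a finite limit. I would therefore not treat the numerator and denominator separately but work throughout with the combined product $\prod_p (\cdots)$, show it equals a ratio of $L$-functions times a finite Euler factor over primes dividing the conductor, invoke Dirichlet's theorem that $L(1,\chi)\neq 0$ for the nonprincipal real characters $\chi_{\pm D}$ to get a nonzero finite limit, and only at the very end split the notation into the two pieces displayed in the box — with the understanding (which I would state explicitly as part of the theorem's hypotheses, matching ``$p$ a prime which does not divide $D$'') that the products range over primes coprime to $D$ so that the bad Euler factors never appear. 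A secondary, more bookkeeping obstacle is pinning down the precise normalizing constants (the $2\pi$, the factor $2$, and the number $w$ of units) so that the two class number formulas combine to give exactly the boxed constant and not some rational multiple of it; I would resolve this by citing the standard statements of both formulas from Landau \cite{Landau} and checking the $D>2$ hypothesis guarantees $w=2$ in the definite case, which supplies precisely the lone factor $2$ in the denominator.
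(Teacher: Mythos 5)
Your outline does reach the boxed constant, and your care about the fact that the two products diverge separately (so that only their ratio has a limit) is exactly right; as a freestanding proof of the identity, granting the standard apparatus of Kronecker characters and their $L$-functions, it would go through. But it is a genuinely different route from the paper's, and in the context of this paper the difference is the whole point. The paper never rewrites anything as $L(s,\chi_{\pm D})$: it takes the two class number formulas of Theorem~\ref{CLN} literally, as the limits of $(s-1)\sum_Q\sum_{x,y}Q(x,y)^{-s}$ for determinants $+D$ and $-D$, expresses each of those Epstein-type sums by the reciprocity-free ``fundamental identity'' of Theorem~\ref{FI} (which rests on the representation-counting Lemma~\ref{countinglemma}, not on any character), and simply divides: the common factor $\sum_{(n,2D)=1}n^{-2s}$, the $(s-1)$'s and the $\sqrt{D}$'s cancel, the constant $k=2$ of the definite case supplies the lone factor $2$, and the identity falls out with no Euler-product rearrangement at all.

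The reason for that detour is the circularity your proposal would import. Treating $p\mapsto\left(\frac{\pm D}{p}\right)$ as a Dirichlet character --- periodic in $p$, with an $L$-series convergent and continuous at $s=1$ and with $L(1,\chi)\neq 0$ --- is precisely what Dirichlet gets by applying quadratic reciprocity; the paper says so explicitly when it explains why the standard $L$-series form of the class number formula had to be avoided. Since Theorem~\ref{TeegeID} is proved here in order to supply Legendre's missing Lemma~\ref{LL} and thereby complete Legendre's proof of reciprocity, your appeal to $\chi_{\pm D}$ as genuine characters (i.e., to the periodicity of $\left(\frac{D}{p}\right)$ in $p$) and to Dirichlet's non-vanishing theorem presupposes the very theorem the construction is meant to establish. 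The repair is not more analytic care but a change of input: replace the $L$-function bookkeeping by the fundamental identity, which expresses the same products over $\left(\frac{\pm D}{p}\right)=1$ directly in terms of representation numbers; once that substitution is made, your division-of-the-two-class-number-formulas plan coincides with the paper's proof.
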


\begin{exmp}
Let $d=\pm 20$.  Then $\pm D=\pm 5$.  It is well-known that the primes $p$ with $\left(\frac{5}{p}\right)=1$ are of the two forms $p=5m\pm 1$ and the primes $p$ with  $\left(\frac{-5}{p}\right)=1$ 
are of the four forms $20n+1,20n+9,20n+3,20n+7$.  Moreover (see the next two examples) $h(5)=1, h(-5)=2$.  Thus, Teege's identity takes the form
$$
\boxed{\frac{\ln(9+4\sqrt{5})}{2\cdot 2\pi}=\lim_{s\rightarrow 1}\dfrac{\displaystyle\prod_{ p=5m\pm 1}\frac{1+\frac{1}{p^s}}{1-\frac{1}{p^s}}}{2\displaystyle\prod_{ p=20n+1,20n+9,20n+3,20n+7}\frac{1+\frac{1}{p^s}}{1-\frac{1}{p^s}}}}
$$
that is
$$
\boxed{\frac{\ln(9+4\sqrt{5})}{2\pi}=\frac{\frac{1+\frac{1}{11}}{1-\frac{1}{11}}}{\frac{1+\frac{1}{3}}{1-\frac{1}{3}}}\cdot\frac{\frac{1+\frac{1}{19}}{1-\frac{1}{19}}}{\frac{1+\frac{1}{7}}{1-\frac{1}{7}}}\cdot\frac{\frac{1+\frac{1}{29}}{1-\frac{1}{29}}}{\frac{1+\frac{1}{29}}{1-\frac{1}{29}}}\cdot\frac{\frac{1+\frac{1}{31}}{1-\frac{1}{31}}}{\frac{1+\frac{1}{37}}{1-\frac{1}{37}}}\cdots}
$$
\
\end{exmp}

The idea of Teege's proof is this. We argue by contradiction.  We let $D$\emph{ be a prime for which Legendre's lemma is false}.  Then the left hand side of Teege's identity is a\emph{ finite number} while the right hand side is shown to \emph{diverge to infinity}.  Thus Legendre's lemma is not false for any prime, that is,\emph{ it is always true. } The hard part of the proof of the identity is the left-hand side.  Dirichlet showed that the numerator and denominator are measures of the average number of distinct representations of a number $m$ by binary quadratic forms of determinant $\pm D$, respectively, as $m\rightarrow \infty$.

It turns out that Teege's identity is a consequence of two other results: the first  we call the \emph{\textbf{fundamental identity}} which we state later on after some introductory examples; and the second is Dirichlet's \emph{\textbf{class number formula.}}

Then we will devote the rest of our paper to proving the identity and discussing the class number formulas, and apply them to prove Teege's identity.  Finally we'll use Teege's identity to give a detailed proof of Legendre's Lemma.  We will follow with a discussion of Rogers' proof.


\section{Dirichlet's ``fundamental identity"}
  
  We begin with two examples

\begin{exmp}
Let $d:=-20$.  Then it is well known that any BQF with $d:=-20$ is equivalent to one of the two BFQ's:
$$
Q_1(x,y):=x^2+5y^2,\quad Q_2:=2x^2+2xy+3y^2.
$$
Thus, the class number
$$
h(-20)=2.
$$

The numbers properly and improperly represented by $Q_1$ include
$$
0, 1, 4, 5, 6, 9, 14, 16, 20, 21, 24, 25, 29, 30, \cdots
$$
 while the numbers properly and improperly represented by $Q_2$ include
 $$
 0, 2, 3, 7, 8, 10, 12, 15, 18, 23, 27, 28, 32, 35,\cdots
 $$
 
 We will be interested in the subset of those integers represented by $Q_1$ and $Q_2$ which are \emph{relatively prime} to $d:=-20$, i.e. to
 $$
 1,3,7,9,21,23,27,29,41,43,47,49,61,63,67,69,81,\cdots.
 $$

 We will  study the series (today called an \emph{Epstein zeta function})
 $$
 \sum_{x,y}\frac{1}{Q_1(x,y)^s}+\sum_{x,y}\frac{1}{Q_2(x,y)^s}=\frac{2}{1^s}+\frac{4}{3^s}+\frac{4}{7^s}+\frac{4}{9^s}+\frac{8}{21^s}+\cdots
 $$
 where $Q$ runs over the two BFQ's $Q_1$ and $Q_2$ once and $x,y$ run over all relatively prime pairs with $Q(x,y)$ relatively prime to $d$ , while the numerator is t\emph{he number of proper representations of the denominator by some $Q(x,y)$ of determinant $-5$}.  For example, $m=9$ is represented properly by $Q_1$ by the four pairs $(x,y)=(2,1),(-2,1),(2,-1),(-2,-1)$ and $m=7$ is represented properly by $Q_2$ by the four pairs $(x,y)=(1,1),(-1,-1),(-1,2),(1,-2).$

\end{exmp}

\begin{exmp}
Let $d:=+20$.  Then it can be proved that any BQF with $d:=+20$ is equivalent to the \emph{single} BQF
$$
Q(x,y):=x^2-5y^2.
$$
Therefore
$$
h(+20)=1.
$$
The numbers properly represented by $Q(x,y)$ which are prime to $d:=+20$ are
$$1,9,11,19,29,31,41,49,59,61,71,79,81,89,99,\cdots ,$$
and we will study the Epstein zeta series 
$$
\sum_{x,y}\frac{1}{Q(x,y)^s}=\frac{2}{1^s}+\frac{4}{9^s}+\frac{4}{11^s}+\frac{4}{19^s}+\cdots
$$
where again the numerator is the number of proper representations of the denominator by the one $Q(x,y)$ of determinant $+5$.
\end{exmp}

These two examples illustrate the left side of a fundamental identity which we now study.  


The first step in the proof of our final identity is the following standard counting result (see \cite{Mathews} \S202).

Let
$$
w:= \begin{cases} 6 \quad\text{if}\quad d=-3,\\ 4\quad\text{if}\quad d=-4,\\ 2\quad\text{if}\quad d<-4,\\ 1\quad\text{if}\quad d>0.
\end{cases}
$$
\begin{lemma}
\label{countinglemma}
Suppose $n$ is an integer relatively prime to $2D$.  If $n$ is divisible by $\mu$ distinct primes each of which satisfies $\left(\frac{D}{p}\right)=1$, but by no other primes, then $n$ can be represented in $w2^{\mu}$ distinct ways by a primitive form of determinant $D$.  Otherwise $n$ cannot be represented by a primitive form of determinant $D$.
\end{lemma}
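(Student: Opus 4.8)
The plan is to reduce the representation problem to a quadratic congruence, count the solutions of that congruence by elementary means, and then invoke the theory of automorphs to recover the weight $w$. First I would set up the classical dictionary between proper representations of $n$ and the congruence $z^{2}\equiv D\pmod n$, working throughout with the discriminant $d=4D$. If $n=Q(x_{0},y_{0})$ with $\gcd(x_{0},y_{0})=1$, choose integers $x_{1},y_{1}$ with $x_{0}y_{1}-x_{1}y_{0}=1$; the unimodular substitution $x=x_{0}x'+x_{1}y'$, $y=y_{0}x'+y_{1}y'$ carries $Q$ into an equivalent form $(n,b,c)$ whose middle coefficient satisfies $b^{2}\equiv d\pmod{4n}$ and is well-defined modulo $2n$. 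Conversely, each residue $b\pmod{2n}$ with $b^{2}\equiv d\pmod{4n}$ produces the form $(n,b,c)$ with $c:=(b^{2}-d)/(4n)$; because $\gcd(n,2D)=1$ this form is primitive, and it represents $n$ properly at $(x',y')=(1,0)$. Since $d\equiv 0\pmod 4$ and $n$ is odd, writing $b=2z$ turns the constraint into $z^{2}\equiv D\pmod n$ with $z$ taken modulo $n$. In particular, $n$ is represented by some primitive form of determinant $D$ if and only if $z^{2}\equiv D\pmod n$ is solvable.

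Next I would count the solutions of $z^{2}\equiv D\pmod n$. Factor $n=p_{1}^{e_{1}}\cdots p_{k}^{e_{k}}$; each $p_{i}$ is odd and prime to $D$, since $\gcd(n,2D)=1$, which also excludes $\left(\frac{D}{p_{i}}\right)=0$. Hensel's lemma gives exactly two solutions of $z^{2}\equiv D\pmod{p_{i}^{e_{i}}}$ when $\left(\frac{D}{p_{i}}\right)=1$ and none when $\left(\frac{D}{p_{i}}\right)=-1$, and the Chinese Remainder Theorem multiplies the local counts. Hence if every prime divisor of $n$ satisfies $\left(\frac{D}{p}\right)=1$, that is $k=\mu$, there are exactly $2^{\mu}$ solutions modulo $n$, while if some prime divisor has $\left(\frac{D}{p}\right)=-1$ there are none and $n$ is not represented at all.

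Finally, and this is where $w$ enters, I would pass from solution classes to distinct representations by analyzing the automorph group of a binary quadratic form. For $d<0$ this group is finite of order $w$ ($w=6$ for $d=-3$, $w=4$ for $d=-4$, $w=2$ for $d<-4$); two proper representations of $n$ by a fixed form that give the same residue $b\pmod{2n}$ differ by an automorph, while distinct residues give inequivalent representations, so exactly $w$ counted representations lie over each of the $2^{\mu}$ residue classes, yielding $w\,2^{\mu}$ in total over a complete system of class representatives. For $d>0$ one takes $w=1$ and counts representations modulo the infinite automorph group coming from the Pell equation $T^{2}-DU^{2}=1$, again obtaining one representation class per residue, hence $2^{\mu}=w\,2^{\mu}$. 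As a check, in the example with $d=-20$ one has $D=-5$, $n=9$, $\mu=1$, $w=2$, and the formula predicts four proper representations of $9$ — precisely the four pairs $(\pm 2,\pm 1)$ listed there.

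The step I expect to be the main obstacle is not the congruence count, which is a routine application of Hensel's lemma and the Chinese Remainder Theorem, but the two structural steps: establishing the representation–congruence dictionary with full rigour (well-definedness of $b$ modulo $2n$, primitivity of $(n,b,c)$, and surjectivity onto the solution classes) and, above all, pinning down $w$, which requires a careful case analysis of the automorph group of a binary quadratic form in both the definite and indefinite settings together with a proof that it acts on representations with exactly the claimed multiplicities.
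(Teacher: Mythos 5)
The paper never proves this lemma: it is quoted as a standard counting result with only a citation to Mathews \S 202 (it is Dirichlet's representation-counting lemma), so there is no in-paper argument to compare with. Your sketch is exactly the classical proof from that literature: the dictionary between proper representations of $n$ and residues $b \bmod 2n$ with $b^{2}\equiv 4D \pmod{4n}$ (equivalently $z^{2}\equiv D\pmod n$, since $n$ is odd), the local count of $2^{\mu}$ or $0$ solutions via Hensel's lemma and the Chinese Remainder Theorem, and the factor $w$ coming from the automorph group, with the indefinite case normalized via the Pell solution. That is the right route, and the steps you single out as delicate (well-definedness of $b \bmod 2n$, primitivity of $(n,b,c)$, surjectivity onto the root classes, and the automorph multiplicities) are precisely where the real content lies, so your plan is sound. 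Two points of precision you should build in: first, the dictionary, and hence the whole count, concerns \emph{proper} representations, so your ``$n$ is represented \ldots if and only if'' sentence should say ``properly represented'' — for example $9=3^{2}$ is (improperly) represented by $x^{2}+y^{2}$ of determinant $-1$ even though $\left(\frac{-1}{3}\right)=-1$, and the lemma's ``otherwise $n$ cannot be represented'' is only true in the proper sense (the paper's own examples, which count proper representations, confirm this reading); second, for $D>0$ the assertion $w=1$ only makes literal sense once representations are normalized by the side conditions $y>0$, $U(ax+by)>Ty$ that the paper states in the fundamental identity — you note this, but it must be part of the statement being proved rather than a remark appended at the end.
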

$\hfill \blacksquare$
\\

For example if $Q(x,y)=3x^2+2xy+2y^2=7$ the theorem states that it has $2^{1+1}$ solutions, and indeed the pairs $(x,y)=(\pm 1,\pm 1),(\pm 2,\mp 1)$ are the four solutions predicted.

By applying the above lemma we obtain the following result.
\begin{thm}
The following identity is valid
$$
\sum_{Q}\sum_{x,y}\frac{1}{Q(x,y)^s}=w\sum_{m=1}^{\infty} \frac{2^{\mu}}{m^s} = w\prod_{ \left(\frac{D}{p}\right)=1}\frac{1+\frac{1}{p^s}}{1-\frac{1}{p^s}}
$$
where $Q$ runs over each class once and $x$ and $y$ run over all relatively prime pairs with $Q(x,y)$ relatively prime to $D$.

\end{thm}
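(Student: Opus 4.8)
The plan is to prove the identity in two stages: first establish the left-hand equality $\sum_{Q}\sum_{x,y} Q(x,y)^{-s} = w\sum_{m\geq 1} 2^{\mu(m)} m^{-s}$ as a purely combinatorial rearrangement of the Dirichlet series, and then convert the resulting arithmetic series into the Euler product. For the first stage, I would fix a positive integer $m$ coprime to $D$ (indeed coprime to $2D$, matching the hypothesis of Lemma~\ref{countinglemma}) and count, over all choices of representative $Q$ (one per equivalence class) and all coprime pairs $(x,y)$, the total number of solutions of $Q(x,y)=m$. The key input is Lemma~\ref{countinglemma}: this total equals $w\,2^{\mu(m)}$ when $m$ is a product of $\mu(m)$ distinct primes all satisfying $\left(\frac{D}{p}\right)=1$, and equals $0$ otherwise. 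The one genuinely delicate point here is that the lemma as stated speaks of representations ``by a primitive form of determinant $D$'' — I must make explicit (or cite from \cite{Mathews}) that this count is precisely the count obtained by running $Q$ over one representative of each equivalence class and $(x,y)$ over all coprime pairs, and that equivalent forms represent the same integers with the same multiplicity, so the class-sum is well defined independently of the chosen representatives. Granting that, grouping the double sum by the common value $m=Q(x,y)$ yields $\sum_Q\sum_{x,y}Q(x,y)^{-s} = \sum_{m} (w\,2^{\mu(m)}) m^{-s}$, where $m$ ranges over integers coprime to $2D$ that are squarefree products of split primes; but since $2^{\mu(m)}=0$ is the convention forced on all other $m$ by the lemma (no representation), we may as well sum over all $m\geq 1$ coprime to $2D$ with $\mu(m)$ interpreted as ``$\mu$ if $m$ is a squarefree product of $\mu$ split primes, and the term being $0$ otherwise'' — i.e., restrict the support to squarefree products of split primes.

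For the second stage I would expand the Euler product $\prod_{(\frac{D}{p})=1}\frac{1+p^{-s}}{1-p^{-s}}$. Writing $\frac{1+p^{-s}}{1-p^{-s}} = (1+p^{-s})(1+p^{-s}+p^{-2s}+\cdots) = 1 + 2p^{-s} + 2p^{-2s}+2p^{-3s}+\cdots$, one sees that multiplying out over all split primes $p$ produces exactly $\sum_{m} c(m) m^{-s}$ where $c(m)=\prod_{p^e\| m, (\frac{D}{p})=1} 2 = 2^{\omega(m)}$ if every prime factor of $m$ is split, and $c(m)=0$ if $m$ has a non-split prime factor — and crucially the ``$2$'' appears regardless of the exponent $e\geq 1$, so $c(m)=2^{\omega(m)}$ even when $m$ is not squarefree. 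Comparing with the arithmetic series from stage one, I must reconcile the apparent discrepancy: stage one restricted to squarefree $m$ (products of \emph{distinct} split primes), whereas the Euler product produces all $m$ supported on split primes with weight $2^{\omega(m)}$. The resolution is that both series define the \emph{same} function — this is the standard identity $\sum_{m\,:\,p\mid m\Rightarrow (\frac{D}{p})=1} 2^{\omega(m)} m^{-s} = \prod_{(\frac{D}{p})=1}\frac{1+p^{-s}}{1-p^{-s}}$, and separately $\sum_{m\text{ sqfree, split}} 2^{\omega(m)} m^{-s} = \prod_{(\frac{D}{p})=1}(1+2p^{-s})$, which is a \emph{different} product. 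So I would not try to match the two arithmetic series termwise; instead I would present the middle expression $w\sum_m 2^{\mu}/m^s$ as shorthand with $\mu$ and the support as dictated by Lemma~\ref{countinglemma} (squarefree, split), and then prove directly that this equals $w\prod_{(\frac{D}{p})=1}\frac{1+p^{-s}}{1-p^{-s}}$; wait — that is false as just noted. Hence the correct reading, which I would adopt, is that $2^\mu$ in the middle term is $2^{\omega(m)}$ with $m$ ranging over \emph{all} integers coprime to $2D$ whose prime factors are all split (not merely squarefree ones), and the count in Lemma~\ref{countinglemma} must correspondingly be read as giving $w\,2^{\omega(n)}$ for every such $n$ — I would state this explicitly and re-examine the lemma's phrasing (``$\mu$ distinct primes'' meaning $\omega(n)=\mu$, with no squarefree restriction) to confirm it, then the second stage is the routine Euler-product expansion above.

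The main obstacle, then, is not analytic but bookkeeping: pinning down exactly which integers $m$ carry nonzero weight and with what multiplicity, and making sure the ``$2^\mu$'' in the middle expression, the representation count in Lemma~\ref{countinglemma}, and the Euler-factor expansion $(1+p^{-s})/(1-p^{-s}) = 1+2\sum_{e\geq 1}p^{-es}$ are all mutually consistent. Convergence is a minor issue handled by noting the product and series converge absolutely for $\Re(s)>1$ (comparison with $\zeta(s)^2$), which justifies all rearrangements; I would dispatch this in one sentence. Once the indexing is correct, the chain $\sum_Q\sum_{x,y} Q(x,y)^{-s} = w\sum_m 2^{\omega(m)} m^{-s} = w\prod_{(\frac{D}{p})=1}\frac{1+p^{-s}}{1-p^{-s}}$ follows, the first equality from Lemma~\ref{countinglemma} and the second from expanding each Euler factor.
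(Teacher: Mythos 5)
Your final reading is the right one and your argument is essentially the paper's: the left-hand equality is exactly the content of Lemma~\ref{countinglemma} (with $\mu=\omega(m)$, no squarefree restriction, so $m$ runs over all integers coprime to $2D$ all of whose prime factors satisfy $\left(\frac{D}{p}\right)=1$), and the right-hand equality is Dirichlet's expansion of each factor as $\frac{1+p^{-s}}{1-p^{-s}}=1+2\sum_{e\geq 1}p^{-es}$ followed by multiplying out via unique factorization, which is precisely the computation the paper reproduces. The detour through the provisional squarefree reading (and the comparison with $\prod\bigl(1+2p^{-s}\bigr)$) is correctly discarded, and your added remarks on coprimality to $2D$ versus $D$ and on absolute convergence for $s>1$ only tighten what the paper leaves implicit.
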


\begin{proof}  
We have to prove the right-hand equality.
We follow Dirichlet( \cite{Dirichlet 2} pp. 154-155)
Let $$
p_1,p_2,p_2,\cdots 
$$
be the primes in the above product.  Then each $m$ in the above sum is of the form
$$
p_1^{n_1}p_2^{n_2}p_3^{n_3}\cdots
$$
where the exponents $n_1,n_2,n_3,\cdots$ are positive integers or zero, and each $m$ is uniquely expressible in this form.  If we now form infinite series corresponding to these primes
$$
1+\frac{2}{p_1^{s}}+\frac{2}{p_1^{2s}}+\frac{2}{p_1^{3s}}+\cdots\frac{2}{p_1^{n_1s}}+\cdots
$$
$$
1+\frac{2}{p_2^{s}}+\frac{2}{p_2^{2s}}+\frac{2}{p_2^{3s}}+\cdots\frac{2}{p_2^{n_2s}}+\cdots
$$
$$
1+\frac{2}{p_3^{s}}+\frac{2}{p_3^{2s}}+\frac{2}{p_3^{3s}}+\cdots\frac{2}{p_3^{n_3s}}+\cdots
$$
and so on, the product of arbitrary terms of the first, second, third series, etc., has the form
$$
\frac{2^{\mu}}{(p_1^{n_1}p_2^{n_2}p_3^{n_3}\cdots)^s}=\frac{2^{\mu}}{m^s}
$$
where $\mu$ is the number of primes $p$ actually dividing $m$, i.e., with non-zero exponent $n$.  Thus, by unique factorization,  we get each term in our sum exactly once.  

On the other hand
$$
1+\frac{2}{p^{s}}+\frac{2}{p^{2s}}+\frac{2}{p^{3s}}+\cdots\frac{2}{p^{ns}}+\cdots=\frac{2}{p^s}\frac{1}{2-\frac{1}{p^s}}=\frac{1+\frac{1}{p^s}}{1-\frac{1}{p^s}}
$$
This completes the proof.
\end{proof}

Now we transform the identity into a final form.  First we multiply both sides by $\displaystyle\sum_{(n,2D)=1}\frac{1}{n^{2s}}$.

Then the left-hand side of the modified identity is a triply-infinite series whose first term is of the form (say):
$$
\frac{1}{(an^2x^2+2bn^2xy+cn^2y^2)^{s}}.
$$
But, if we put
$$
x':=nx,y':=ny
$$
then the modified left-hand side becomes a doubly infinite series
$$
\sum\frac{1}{(ax'^2+2bx'y'+cy'^2)^{s}}
$$
where $x,y$ are no longer necessarily relatively prime.

We have now proven what Dirichlet called  (\cite{Dirichlet 2},\S 88) the \textbf{fundamental identity.}

\begin{thm}
\label{FI}
$$
\boxed{\sum_{Q}\sum_{x,y}\frac{1}{Q(x,y)^{s}}=k\displaystyle\sum_{(n,2D)=1}\frac{1}{n^{2s}}\cdot\prod_{ \left(\frac{D}{p}\right)=1}\frac{1+\frac{1}{p^{s}}}{1-\frac{1}{p^{s}}}}
$$
where $Q(x,y)$ is relatively prime to $2D$, and, if $D>0$, then
$$y>0, U\cdot(ax+by)>Ty$$
where $T,U$ are the smallest positive integer solutions of 
$T^2-DU^2=1.$  Finally, $x,y$ run over all pairs of positive integers where $Q(x,y)$ satisfies the above conditions.

\end{thm}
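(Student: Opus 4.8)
The plan is to deduce Theorem~\ref{FI} from the identity of the preceding theorem simply by multiplying that identity through by the auxiliary series $\displaystyle\sum_{(n,2D)=1}n^{-2s}$ and then re-indexing the left-hand side. Throughout we work for real $s>1$, where every series in sight is a Dirichlet series with non-negative coefficients — for $D<0$ because the relevant forms are positive definite, and for $D>0$ because the side conditions $y>0$, $U(ax+by)>Ty$ single out one representative from each orbit of the group of automorphs, so the inner sums are genuinely finite — and absolute convergence in that range then licenses the term-by-term product and every rearrangement used below.

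I would write the preceding theorem in the form
$$
\sum_{Q}\ \sum_{\substack{(x,y)=1\\ (Q(x,y),\,2D)=1}}\frac{1}{Q(x,y)^{s}}\;=\;w\prod_{\left(\frac{D}{p}\right)=1}\frac{1+\tfrac1{p^{s}}}{1-\tfrac1{p^{s}}},
$$
with $x,y$ carrying whatever sign restrictions are in force when $D>0$, and multiply both sides by $\displaystyle\sum_{(n,2D)=1}n^{-2s}$. The right-hand side becomes at once $k\sum_{(n,2D)=1}n^{-2s}\prod_{(D/p)=1}(1+p^{-s})/(1-p^{-s})$ with $k=w$, which is precisely the right-hand side of Theorem~\ref{FI}; so the entire argument lies in identifying the new left-hand side. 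Expanding the product on the left, a generic term is $n^{-2s}\,Q(x,y)^{-s}$ subject to $\gcd(n,2D)=1$, $\gcd(x,y)=1$, $\gcd(Q(x,y),2D)=1$. Put $x':=nx$, $y':=ny$; since $Q$ is a quadratic form, $Q(x',y')=n^{2}Q(x,y)$, so the term equals $Q(x',y')^{-s}$. The map $(n,(x,y))\mapsto(x',y')$ is a bijection onto the set of all pairs $(x',y')$ with $\gcd(Q(x',y'),2D)=1$ — and satisfying the same side conditions, since those are homogeneous of degree one in $(x',y')$ and hence preserved under scaling by the positive integer $n$ — its inverse sending $(x',y')$ to $n:=\gcd(x',y')$ together with $(x'/n,\,y'/n)$, where $\gcd(n,2D)=1$ is forced because $n^{2}\mid Q(x',y')$. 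Summing over the classes $Q$ therefore converts the left-hand side into $\sum_{Q}\sum_{x',y'}Q(x',y')^{-s}$ taken over all admissible (no longer necessarily coprime) pairs, which is exactly the left-hand side of Theorem~\ref{FI}.

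The one place that needs genuine care — and the step I expect to be the main obstacle — is the indefinite case $D>0$. There one must verify that the region $y>0$, $U(ax+by)>Ty$, with $T,U$ the fundamental solution of $T^{2}-DU^{2}=1$, really is an exact set of orbit representatives for the automorphs of $Q$, so that the inner sums converge and the counting lemma (Lemma~\ref{countinglemma}) applies to them unchanged; and then that this region, being cut out by inequalities homogeneous of degree one, is stable under the dilations $(x,y)\mapsto(nx,ny)$ for $n\ge 1$, which is exactly what makes the bijection above respect the side conditions. For $D<0$ no such check is needed, and the constant $k=w$ (equal to $6,4,2$ according as $d=-3$, $d=-4$, or $d<-4$) is inherited verbatim from the preceding theorem.
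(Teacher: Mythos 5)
Your proposal is correct and follows essentially the same route as the paper: multiply the preceding Euler-product identity by $\sum_{(n,2D)=1}n^{-2s}$ and absorb the factor by the substitution $x'=nx$, $y'=ny$, which removes the coprimality restriction on $(x,y)$. You merely make explicit what the paper leaves implicit --- the bijection $(n,(x,y))\mapsto(x',y')$ with inverse $n=\gcd(x',y')$, and the degree-one homogeneity of the side conditions $y>0$, $U(ax+by)>Ty$ in the case $D>0$ --- which is a welcome amplification rather than a different argument.
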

$\hfill \blacksquare$


\section{Dirichlet's class number formulas and Teege's identity.}

Let $d$ be the discriminant of the quadratic forms $Q(x,y)$ with class number $h(d)$.  The left hand side of the fundamental identity is an infinite series whose general term is 
$$
\frac{r_Q(m)}{m^{s}}
$$
where  $r_Q(m)$ is the number of pairs $(x,y)$ of integers which satisfy
$$
Q(x,y)=m.
$$
and $s>1$.
In 1839 (\cite{Dirichlet 1}) Dirichlet published a brilliant \emph{tour de force} of elementary (though not simple) number theory and calculus in which he  shows that if the fundamental identity is multiplied by $(s-1)$, then \emph{each side tends to a finite limite as} $s\rightarrow 1$.  Moreover, surprisingly, each summand on the left-hand side tends to the \emph{same} limit, which depends only on $D$.  Dirichlet evaluates this limit by \emph{geometric} arguments (area of an \emph{ellipse} when $D<0$, area of a \emph{hyperbolic sector} when $D>0$).  Since there are $h(D)$ terms on the left-hand side,  and when $s\rightarrow 1$ they all tend to the same limit, the \emph{left-hand side tends to a number of the form} 
$$
h(D)\times(\text{this geometric limit})
$$.

Unfortunately, space prevents us from giving the proofs but they can be found in Dirichlet's text (\S 88-\S 104 \cite{Dirichlet 2}).

We cite his final result.
\begin{thm} (Dirichlet's Class Number Formulas)
\label{CLN}
Let $s>1$.
If $D<0$ then
$$
\boxed{\lim_{s\rightarrow 1}(s-1)\cdot\sum_{Q}\sum_{x,y}\frac{1}{Q(x,y)^{s}}=\frac{h\cdot \pi}{\sqrt{|D|}}}
$$
If $D>0$ then
$$
\boxed{\lim_{s\rightarrow 1}(s-1)\cdot\sum_{Q}\sum_{x,y}\frac{1}{Q(x,y)^{s}}=\frac{h}{2\sqrt{D}}\ln(T+U\sqrt{D}).}
$$

\end{thm}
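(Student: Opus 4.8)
The plan is to establish Dirichlet's class number formulas by evaluating, for each class representative $Q$ separately, the limit $\lim_{s\to 1}(s-1)\sum_{x,y} Q(x,y)^{-s}$, and then multiply by $h=h(d)$ since all $h$ summands on the left-hand side of the fundamental identity contribute the same limit. The key mechanism is a Tauberian/averaging argument: for a reduced form $Q$ of determinant $D$, the partial sums $\sum_{Q(x,y)\le X, \, (x,y)\in\mathcal R} 1$ (where $\mathcal R$ encodes the positivity/Pell-sector restrictions from Theorem~\ref{FI}) count lattice points in a dilated region, and this count is asymptotically (area of the region)$\cdot X$ as $X\to\infty$. Abel summation then converts the lattice-point asymptotics into the pole behavior: if $\sum_{Q(x,y)\le X} 1 \sim \kappa X$ then $\sum_{x,y}Q(x,y)^{-s} \sim \kappa/(s-1)$ as $s\to 1^+$, so $\lim_{s\to1}(s-1)\sum Q(x,y)^{-s}=\kappa$, the area density.

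First I would treat the definite case $D<0$. Here $Q(x,y)=ax^2+bxy+cy^2$ is positive definite with $b^2-4ac=4D<0$, and the region $\{Q(x,y)\le X\}$ is the interior of an ellipse of area $\pi X/\sqrt{|D|}$ (the discriminant of the quadratic form controls the area: the ellipse $ax^2+bxy+cy^2\le X$ has area $2\pi X/\sqrt{4ac-b^2}=\pi X/\sqrt{|D|}$). Gauss's classical lattice-point estimate gives $\#\{(x,y)\in\mathbb Z^2: Q(x,y)\le X\}=\pi X/\sqrt{|D|}+O(\sqrt X)$. One must account for the weight $w$ (the number of automorphs, i.e. units: $w=2$ for $D<-4$, and $4$ or $6$ in the two exceptional cases) and the coprimality restriction $(Q(x,y),2D)=1$ built into Theorem~\ref{FI}; but after multiplication by $h$ and reconciliation with the Euler product on the right-hand side these bookkeeping factors are exactly what make the fundamental identity consistent, so the net limit of the left-hand side is $h\pi/\sqrt{|D|}$.

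Next I would handle the indefinite case $D>0$, which is the main obstacle. Now $Q(x,y)=ax^2+bxy+cy^2$ is an indefinite form, so $\{Q(x,y)\le X\}$ is unbounded (it contains whole hyperbolic branches), and the naive sum $\sum Q(x,y)^{-s}$ would diverge for \emph{every} $s$ without the extra constraints. This is precisely why Theorem~\ref{FI} imposes the conditions $y>0$ and $U(ax+by)>Ty$ with $T,U$ the fundamental solution of $T^2-DU^2=1$: these cut out a \emph{fundamental domain} for the action of the automorphism group of $Q$ (generated by the unit $T+U\sqrt D$) on the set of representations. The region in question, intersected with $Q(x,y)\le X$, is a hyperbolic sector, and its area is computed by the substitution diagonalizing $Q$ (writing $Q$ in terms of the two conjugate linear forms $\xi,\eta$ with $\xi\eta$ proportional to $Q$), which turns the sector into a region $\{|\xi\eta|\le \text{const}\cdot X, \ 1\le \xi/\xi_0 < \lambda\}$ whose area is a logarithm; carrying the constants through gives area density $\frac{1}{2\sqrt D}\ln(T+U\sqrt D)$. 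The delicate points are: verifying that the stated inequalities really do pick out one representative per orbit (no double counting, no omission), and controlling the lattice-point count in an \emph{unbounded} sector so that the error term is still $o(X)$ uniformly — this requires care near the two asymptotes of the hyperbola, where the sector is thin and long. Once the sector area is identified as $\frac{1}{2\sqrt D}\ln(T+U\sqrt D)$ and Abel summation is applied as before, multiplying by the $h$ equal contributions yields $\lim_{s\to1}(s-1)\sum_Q\sum_{x,y}Q(x,y)^{-s}=\frac{h}{2\sqrt D}\ln(T+U\sqrt D)$, completing the proof.
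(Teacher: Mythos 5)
Your proposal is essentially the paper's own route: the paper does not prove this theorem itself but cites Dirichlet's \emph{Lectures} (\S 88--\S 104) and sketches exactly the argument you describe --- each of the $h$ class sums tends to the same limit, evaluated geometrically as the area of an ellipse when $D<0$ and of a hyperbolic sector (cut out by the Pell-unit conditions $y>0$, $U(ax+by)>Ty$) when $D>0$, with the lattice-point asymptotics converted into the residue at $s=1$. The loose ends you flag (the weight $w$ and coprimality bookkeeping, the fundamental-domain check, and the error control near the asymptotes for $D>0$) are precisely the details the paper also omits and delegates to Dirichlet.
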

$\hfill \blacksquare$

The standard statement of the class number formulas, namely
$$
\frac{h\pi}{2\sqrt{|D|}}=\sum_{(n,2D)=1)}\frac{1}{n}\left(\frac{D}{n}\right),\quad \frac{h}{2\sqrt{D}}\ln(T+U\sqrt{D})=\sum_{(n,2D)=1)}\frac{1}{n}\left(\frac{D}{n}\right)
$$
where $\left(\frac{D}{n}\right)$ is the Jacobi symbol, has the infinite products in the fundamental identity replaced by ``$L$-series" which Dirichlet obtains by applying the law of quadratic reciprocity.  Of course, we had to avoid that.  Our paper has described only the ``easy" half of his full investigation.

Historically it is worth noting that Dirichlet investigated the class number because his proof of the infinitude of primes in an arithmetic progression required that he prove that a certain  $L$-series always have a non-zero value and it turns out that that series has a sum that is a non-zero multiple of a class number.

We also point out that Rogers' proof \cite{Rogers} of Legendre's Lemma cites a proposition in Selberg's proof \cite{Selberg} of the prime number theorem for arithmetic progressions and the proof of that proposition also counts lattice points in domains bounded by conic sections, as did Dirichlet above, a technique originated by Gauss.


\begin{proof} (Proof of Teege's Identity)

If we divide the second class number formula in Theorem~\ref{CLN} by the first and apply the fundamental identity Theorem~\ref{FI} we finally obtain \emph{\textbf{Teege's identity}} Theorem 3.2. 
\end{proof}


\section{Teege's proof of Legendre's lemma} 

Teege's proof has two steps:
\begin{itemize}
  \item first he transforms the fundamental identity by assuming $\pm D$ to be a prime of the form $4n+1$.  This allows for simplifying cancellations in the numerator and denominator of Teege's identity.
  \item then he further assumes that $p$ is a prime for which Legendre's lemma is false, and obtains a contradiction.
\end{itemize}
We write the \emph{fundmental identity} in the form
$$\boxed{\sum\frac{1}{Q(x,y)^{s}}=k\displaystyle\sum_{(n,2D)=1}\frac{1}{n^{2s}}\cdot\prod_{ \left(\frac{D}{p}\right)=1}\frac{1+\frac{1}{p^{s}}}{1-\frac{1}{p^{s}}}}$$
where $Q(x,y)$ is relatively prime to $2D$ and $k=2$ if $D<0$, while, if $D>0$, then $k=1$ and 
$$y>0, U\cdot(ax+by)>Ty$$
where $T,U$ are the smallest positive integer solutions of 
$T^2-DU^2=1.$  Finally, $x,y$ run over all pairs of positive integers.

\emph{Now to the proof that fills the gap.}

First suppose that $D$ is a\emph{ positive prime number $p$ of the form $4n+1$}.  Then 
\begin{equation}
\label{I1}
\sum\frac{1}{Q(x,y)^{s}}=\displaystyle\sum_{(n,2D)=1}\frac{1}{n^{2s}}\cdot\prod_{ \left(\frac{p}{A}\right)=1}\frac{1+\frac{1}{A^{s}}}{1-\frac{1}{A^{s}}}\prod_{ \left(\frac{p}{B}\right)=1}\frac{1+\frac{1}{B^{s}}}{1-\frac{1}{B^{s}}}
\end{equation}
where $A$ runs over all primes of the form $4n+1$ of which $p$ is a quadratic residue, and $B$ runs over all primes of the form $4n+3$ of which $p$ is a quadratic residue.

Secondly suppose that \emph{the same prime number $p=4n+1$ is now assumed to be a negative determinant $-p$ equal to $D$.}  Then
\begin{equation}
\label{I2}
\sum\frac{1}{Q_1(x,y)^{s}}=2\displaystyle\sum_{(n,2D)=1}\frac{1}{n^{2s}}\cdot\prod_{ \left(\frac{-p}{A_1}\right)=1}\frac{1+\frac{1}{A_1^{s}}}{1-\frac{1}{A_1^{s}}}\prod_{ \left(\frac{-p}{B_1}\right)=1}\frac{1+\frac{1}{B_1^{s}}}{1-\frac{1}{B_1^{s}}}
\end{equation}
where the sum on the left-hand side runs over all non-equivalent forms of determinant $-p$ , while $A_1$ and $B_1$ on the right hand side run over all primes of the form $4n+1$ and $4n+3$ respectively of which $-p$ is a quadratic residue.

However, it is well-known that for a prime number $A$ of the form $4n+1$ $-p$ is a quadratic residue or non residue of $A$ according as $+p$ is a quadratic residue or non residue of $A$ which implies that the set of primes $A$ \emph{coincides} with the set of primes $A_1$ whence
$$
\prod_{ \left(\frac{p}{A}\right)=1}\frac{1+\frac{1}{A^{s}}}{1-\frac{1}{A^{s}}}=\prod_{ \left(\frac{-p}{A_1}\right)=1}\frac{1+\frac{1}{A_1^{s}}}{1-\frac{1}{A_1^{s}}}.
$$  
However this is \emph{not} the case for the set of primes $B$ because here the Legendre symbol obeys the equation $\left(\frac{-p}{B}\right)=-\left(\frac{p}{B}\right)$.

\emph{We now prove Legendre's lemma by contradiction}.

Suppose that there exists \emph{no} prime number $B$ of which $p$ is a quadratic non residue.  Then $+p$ is also be a residue of all the prime numbers $B$, and therefore \emph{all prime numbers $B$ of the form $4n+3$} must appear in $\prod\frac{1+\frac{1}{B^{s}}}{1-\frac{1}{B^{s}}}$.  On the other hand, if since $-p$ is a non residue of all prime numbers of the form $4n+3$, then \emph{no prime} $B_1$ appears in  quantity $\prod\frac{1+\frac{1}{B_1^{s}}}{1-\frac{1}{B_1^{s}}}$ which means that\emph{ it has to collapse down to} $1$.

We now divide \eqref{I1} by \eqref{I2}. Then we see  the that the products involving $A$ and $A_1$ \emph{cancel}.  Moreover, if we multiply the numerator and denominator on the left hand side by $s-1$.
we obtain
\begin{equation}
\label{I3}
\boxed{\frac{\sum\frac{s-1}{Q(x,y)^{s}}}{\sum\frac{s-1}{Q_1(x,y)^{s}}}=\frac{1}{2}\prod\frac{1+\frac{1}{B^{s}}}{1-\frac{1}{B^{s}}}}
\end{equation}
where the primes $B$ are all primes of the form $4n+3$.

\emph{This equation is the goal of all of the previous developments.}

We now take the limit as $s\rightarrow 1$.  \emph{\textbf{Teege's identity }} shows that the limit of the left hand side is the \emph{finite number}
$$=\frac{h(p)\ln(T+U\sqrt{p})}{h(-p)\cdot 2\pi}$$
while the limit of the right-hand side is the infinite product

\begin{eqnarray*}
\frac{1}{2}\prod\frac{1+\frac{1}{B}}{1-\frac{1}{B}} & > & \frac{1}{2}\prod\frac{1}{1-\frac{1}{B}} \\
 & = &\frac{1}{2}\prod\left(1+\frac{1}{B}+\frac{1}{B^2}+\frac{1}{B^3}+\cdots\right)\\ 
 &=& \frac{1}{2}\sum\frac{1}{\prod B}\\
 &>&\frac{1}{2}\sum\frac{1}{B}\\
 &=&\frac{1}{2}\left(\frac{1}{3}+\frac{1}{7}+\frac{1}{11}+\frac{1}{19}+\cdots\right)
\end{eqnarray*}
where the denominators are all the primes of the form $4n+3$ and it is well known that the series \textbf{\emph{diverges}}
This contradiction proves Legendre's Lemma.
$\hfill \blacksquare$
\\


\section{Teege's refinement}

We will prove that the prime $B$ of the form $4n+3$ whose existence is guaranteed by Legendre's Lemma \emph{can be taken to be smaller than} $p$.  This latter condition was never mentioned by Legendre and is original with Teege.
\begin{proof}
We  give an expanded arrangement of Teege's own proof which uses infinite descent.

By Legendre's lemma there exists a prime number $B$ such that the Legendre symbol $\left(\dfrac{-p}{B}\right)=+1$, which means and  there also exists an equation
\begin{equation}
\label{Teq}
x^2+p=B\cdot b'.
\end{equation}
for some integer $b'$.

First, if $B<p$, then we are done.

Second, if $B>p$, then \emph{we claim:}
$$
b'<B.
$$
In fact,
$$
x<B\Rightarrow x^2+p=Bb'<B^2+p\Rightarrow b'<B+\frac{p}{B}<B+1\Rightarrow b'\leq B.
$$
so either $b'=B$ or we have the strict inequality $b'<B$.  But, if
$$
b'=B\Rightarrow x^2+p=B^2\Rightarrow p=(B-x)(B+x)\Rightarrow B-x=1, B+x=p
$$
and this means
$$
 B=\frac{p+1}{2}<p
$$
since $p>1$ and this is a contradiction.  Thus we \emph{cannot} have $b'=B$.  This proves our claim.
$\hfill \blacksquare$
\\
\\
\indent Now we will apply Fermat's infinite descent.

 Since  $x$ is even and $p$ is of the form $8n+1$, $b'$  is of the form $4n+3$, and so $b'$  must have a prime factor $B'$ of the form $4n+3$, whether or not $p$ and $b'$ have a common factor $p$, such that $\left(\dfrac{-p}{B'}\right)=+1$.  Thus there exists an equation

$$x_1^2+p=B'\cdot b''$$

\noindent where $B'<B$ and $b''$ is again smaller than $B'$ and of the form $4n+3$, from which we can in the same way deduce that there exists a third prime number $B''$ of the form $4n+3$ and smaller than $B'$ for which $\left(\dfrac{-p}{B''}\right)=+1$. By repeating this process we obtain a sequence of decreasing prime numbers
$$
B>B'>B''>\cdots
$$
 of the form $4n+3$ of which $p$ is a nonresidue and finally one smaller than $p$.  Therefore we have proven Gauss' lemma that \emph{there always exists a prime number $q<p$ for which a given prime number $p=8n+1$ a quadratic nonresidue }but with the refinement that at \emph{least one of these prime numbers $q$ must have the form $4n+3$.}
\end{proof}

We note that a very brief numerical search showed \emph{no} example in which $B>p$.


\section{Rogers' proof of Legendre's Lemma.}

We now add some remarks on Rogers' paper \cite{Rogers}.  He rectifies Legendre's original proof (1785) by using the following lemma of Selberg \cite{Selberg}.
\begin{thm}
If $D$ is a nonsquare integer, then
$$
\sum_{p\leq x, \left(\frac{D}{p}\right)=1}\frac{\ln p}{p}=\frac{1}{2}\ln x +O(1),
$$
where $p$ runs over prime values only.
\end{thm}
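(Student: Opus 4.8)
The plan is to deduce the identity from Mertens' elementary estimate $\sum_{p\le x}\frac{\ln p}{p}=\ln x+O(1)$ together with a cancellation bound for the character-twisted sum. Since $\left(\frac{D}{p}\right)=0$ only for the finitely many $p\mid D$, the indicator of the event $\left(\frac{D}{p}\right)=1$ equals $\frac12\bigl(1+\left(\frac{D}{p}\right)\bigr)$ whenever $p\nmid D$, so
\begin{equation*}
\sum_{\substack{p\le x\\ \left(\frac{D}{p}\right)=1}}\frac{\ln p}{p}=\frac12\sum_{\substack{p\le x\\ p\nmid D}}\frac{\ln p}{p}+\frac12\sum_{p\le x}\frac{\left(\frac{D}{p}\right)\ln p}{p}=\frac12\ln x+\frac12\sum_{p\le x}\frac{\left(\frac{D}{p}\right)\ln p}{p}+O(1).
\end{equation*}
So the whole matter reduces to proving $\sum_{p\le x}\frac{\left(\frac{D}{p}\right)\ln p}{p}=O(1)$. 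Replacing $D$ by its squarefree kernel $D_0$ (for $p\nmid D$ one has $\left(\frac{D}{p}\right)=\left(\frac{D_0}{p}\right)$), the function $\chi(n):=\left(\frac{D_0}{n}\right)$ is a real, completely multiplicative, non-principal Dirichlet character of some fixed modulus; altering finitely many terms costs only $O(1)$, so it is enough to bound $\sum_{p\le x}\frac{\chi(p)\ln p}{p}$.

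For that I would run the classical Dirichlet-convolution argument. From $\ln n=\sum_{d\mid n}\Lambda(d)$ and complete multiplicativity,
\begin{equation*}
\sum_{n\le x}\frac{\chi(n)\ln n}{n}=\sum_{d\le x}\frac{\chi(d)\Lambda(d)}{d}\sum_{m\le x/d}\frac{\chi(m)}{m}=L(1,\chi)\sum_{d\le x}\frac{\chi(d)\Lambda(d)}{d}+O\!\Bigl(\tfrac{1}{x}\sum_{d\le x}\Lambda(d)\Bigr),
\end{equation*}
where I have used $\sum_{m\le y}\frac{\chi(m)}{m}=L(1,\chi)+O(1/y)$ (Abel summation from $\sum_{m\le y}\chi(m)=O(1)$) and, likewise, that the left-hand side is itself $O(1)$. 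Chebyshev's bound $\sum_{d\le x}\Lambda(d)=O(x)$ forces the error term to be $O(1)$, so $L(1,\chi)\sum_{d\le x}\frac{\chi(d)\Lambda(d)}{d}=O(1)$; since the prime powers of exponent $\ge 2$ contribute an absolutely convergent quantity, we obtain $L(1,\chi)\sum_{p\le x}\frac{\chi(p)\ln p}{p}=O(1)$, and dividing by $L(1,\chi)$ finishes the estimate.

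The single serious ingredient — and the anticipated main obstacle — is the non-vanishing $L(1,\chi)\ne 0$ for this real character, which is precisely the delicate case of Dirichlet's theorem. Fortunately it is available in the present framework: by Dirichlet's class number formula (Theorem~\ref{CLN}) the quantity $\sum_{(n,2D)=1}\frac1n\left(\frac{D}{n}\right)$ is a strictly positive multiple of the class number $h\ge 1$, and $L(1,\chi)$ differs from it only by a nonzero finite Euler factor, so $L(1,\chi)>0$. One caveat of principle: identifying $p\mapsto\left(\frac{D}{p}\right)$ with a Dirichlet character of bounded modulus is a form of the law of quadratic reciprocity for Jacobi symbols, i.e. of the very theorem under reconstruction; a reader wanting the argument wholly independent of quadratic reciprocity should instead work with the completely multiplicative function $n\mapsto\left(\frac{D}{n}\right)$ through the factorization $\zeta_{\mathbb{Q}(\sqrt D)}(s)=\zeta(s)\sum_{n}\left(\frac{D}{n}\right)n^{-s}$, extract from Dirichlet's lattice-point count a Mertens-type estimate for the Dedekind zeta function of $\mathbb{Q}(\sqrt D)$, and then separate the split, inert and ramified rational primes to recover $2\sum_{p\le x,\,\left(\frac{D}{p}\right)=1}\frac{\ln p}{p}=\ln x+O(1)$. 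Either way, beyond Mertens, Chebyshev, Abel summation and the class number formula, no new idea is needed.
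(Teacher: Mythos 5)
The paper itself contains no proof of this statement: it is quoted from Selberg's paper \cite{Selberg} and closed with a black square, the text only remarking that a complete write-up would be roughly as long as the proof of Dirichlet's class number formulas and, like it, rests on counting lattice points in regions bounded by conics. So there is no internal argument to compare with line by line; the question is whether your argument proves the estimate, and whether it would be admissible in the logical framework this theorem serves.

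As a standalone proof your analytic skeleton is correct and standard: the decomposition of the indicator as $\tfrac12\bigl(1+\bigl(\tfrac{D}{p}\bigr)\bigr)$, Mertens' estimate, the convolution $\ln n=\sum_{d\mid n}\Lambda(d)$ with Chebyshev and Abel summation, and division by $L(1,\chi)$ all go through once $\chi$ is known to be a non-principal Dirichlet character of fixed modulus with $L(1,\chi)\neq 0$. The genuine gap is the one you flag yourself and then set aside. In this paper the statement is an ingredient (via Rogers) in a proof of Legendre's Lemma whose entire purpose is to complete Legendre's proof of quadratic reciprocity, and both of your key inputs presuppose reciprocity: identifying $p\mapsto\bigl(\tfrac{D_0}{p}\bigr)$ with a character of modulus dividing $4|D_0|$ (hence $\sum_{m\le y}\chi(m)=O(1)$) is exactly QR for the Kronecker--Jacobi symbol, and the $L$-series form of the class number formula you invoke for $L(1,\chi)>0$ is, as the paper itself stresses after Theorem~\ref{CLN}, obtained from the fundamental-identity form \emph{by applying} quadratic reciprocity (``Of course, we had to avoid that''). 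Your proposed repair --- work with the non-periodic multiplicative function $n\mapsto\bigl(\tfrac{D}{n}\bigr)$, extract a Mertens-type estimate for $\zeta_{\mathbb{Q}(\sqrt{D})}$ from a Dirichlet-style lattice-point count, then separate split, inert and ramified primes --- is only a sketch, and it is precisely where the difficulty lives: that counting argument is the substance of Selberg's actual proof (and of the class number formulas), not a routine supplement. So within the paper's architecture the main line of the proposal is circular and the fallback is unproven; as a proof of the estimate in isolation, where reciprocity may be assumed, it is fine.
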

$\hfill \blacksquare$

Although Rogers never addresses Legendre's lemma directly, he uses Selberg's result and some elegant mathematics to single out the contribution of distinct arithmetic progressions of primes to Selberg's total sum and in particular to prove the following theorem.

\begin{thm}{\label{Slbg}}
Let $D$ be a nonsquare. Then 
$$
\sum_{p\leq x,p=4n+3, \left(\frac{D}{p}\right)=-1}\frac{\ln p}{p}=\frac{1}{4}\ln x +O(1),
$$
\end{thm}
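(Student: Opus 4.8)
The plan is to decompose the prime-counting sum by congruence classes modulo $4$ and by the value of the character $\left(\frac{D}{p}\right)$, and then combine three ingredients: Selberg's lemma (Theorem immediately preceding), the Mertens-type estimate $\sum_{p\le x}\frac{\ln p}{p}=\ln x+O(1)$ over \emph{all} primes, and the analogous estimate $\sum_{p\le x,\,p\equiv a(4)}\frac{\ln p}{p}=\frac{1}{2}\ln x+O(1)$ for each fixed admissible residue $a\in\{1,3\}$. This last estimate is the prime-number-theorem-for-arithmetic-progressions analogue of Mertens for the modulus $4$; it is elementary (it follows, for instance, from the same Selberg-type machinery applied to the characters mod $4$, or can be quoted from the literature on the elementary proof of Dirichlet's theorem) and I would cite it rather than reprove it.

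First I would write, for a nonsquare $D$ coprime to the relevant primes,
$$
\sum_{\substack{p\le x\\ \left(\frac{D}{p}\right)=1}}\frac{\ln p}{p}
=\sum_{\substack{p\le x,\ p\equiv 1(4)\\ \left(\frac{D}{p}\right)=1}}\frac{\ln p}{p}
+\sum_{\substack{p\le x,\ p\equiv 3(4)\\ \left(\frac{D}{p}\right)=1}}\frac{\ln p}{p},
$$
and similarly split $\sum_{p\le x,\,p\equiv 3(4)}\frac{\ln p}{p}=\frac{1}{2}\ln x+O(1)$ into the part with $\left(\frac{D}{p}\right)=1$ and the part with $\left(\frac{D}{p}\right)=-1$ (the finitely many primes dividing $2D$ contribute $O(1)$ and are harmless). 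The key extra input is that the two residue classes mod $4$ each carry ``half'' of Selberg's sum. Concretely, one shows
$$
\sum_{\substack{p\le x,\ p\equiv 1(4)\\ \left(\frac{D}{p}\right)=1}}\frac{\ln p}{p}=\frac{1}{4}\ln x+O(1),
\qquad
\sum_{\substack{p\le x,\ p\equiv 3(4)\\ \left(\frac{D}{p}\right)=1}}\frac{\ln p}{p}=\frac{1}{4}\ln x+O(1),
$$
whose sum recovers Selberg's $\frac{1}{2}\ln x+O(1)$. To get the split into quarters one brings in the character $\chi_{-4}$ mod $4$: the sum $\sum_{p\le x}\chi_{-4}(p)\big(1+\left(\frac{D}{p}\right)\big)\frac{\ln p}{p}$ is $O(1)$ because it is essentially $\sum\chi_{-4}(p)\frac{\ln p}{p}+\sum\chi_{-4}(p)\left(\frac{D}{p}\right)\frac{\ln p}{p}$, and both of these are bounded — the first by the elementary estimate for the nonprincipal character mod $4$, the second because $\chi_{-4}\cdot\left(\frac{D}{\cdot}\right)$ is again a nonprincipal real character (to a modulus dividing $4|D|$) so its weighted prime sum is $O(1)$. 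Subtracting this bounded quantity from Selberg's sum and dividing by $2$ yields the displayed quarter-asymptotics. Finally, from $\sum_{p\le x,\,p\equiv 3(4)}\frac{\ln p}{p}=\frac12\ln x+O(1)$ and the $\frac14\ln x+O(1)$ value of its $\left(\frac{D}{p}\right)=1$ part, subtraction gives exactly
$$
\sum_{\substack{p\le x,\ p\equiv 3(4)\\ \left(\frac{D}{p}\right)=-1}}\frac{\ln p}{p}=\frac{1}{4}\ln x+O(1),
$$
which is the assertion.

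The main obstacle is establishing that the \emph{twisted} sum $\sum_{p\le x}\chi(p)\frac{\ln p}{p}=O(1)$ for every nonprincipal real character $\chi$ (here $\chi=\chi_{-4}$ and $\chi=\chi_{-4}\cdot\left(\frac{D}{\cdot}\right)$) — equivalently, that the contribution of each admissible residue class is balanced. In the analytic framework this is the nonvanishing of $L(1,\chi)$ together with standard Mertens-type summation; in Selberg's elementary framework it is precisely the content of the elementary proof of Dirichlet's theorem for these small moduli, which Rogers invokes. I would either cite Selberg's paper \cite{Selberg} for this bounded-twisted-sum statement directly, or note that it follows from the nonvanishing of the relevant Dirichlet $L$-function at $s=1$ (itself a consequence of the class number formula, already in play in this paper via Theorem~\ref{CLN}). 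A minor technical point to handle carefully is the bookkeeping of the primes dividing $2D$ and the case distinction when $D$ is itself even or has square factors, but these only ever affect finitely many terms and are absorbed into the $O(1)$.
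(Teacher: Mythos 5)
Your decomposition into the four sums $s_1^{\pm},s_3^{\pm}$ (residue class mod $4$ crossed with the value of $\left(\frac{D}{p}\right)$) is exactly the paper's, and most of your ingredients match Rogers' argument. The problem is the step on which everything hinges, the bound
$$
\sum_{p\le x}\chi_{-4}(p)\left(\frac{D}{p}\right)\frac{\ln p}{p}=O(1),
$$
where $\chi_{-4}$ is your nonprincipal character modulo $4$. You justify it by saying that $\chi_{-4}\cdot\left(\frac{D}{\cdot}\right)$ is ``again a nonprincipal real character to a modulus dividing $4|D|$'' and then appealing to $L(1,\chi)\ne 0$ or to Dirichlet-theorem machinery for that modulus. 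But the assertion that $p\mapsto\left(\frac{D}{p}\right)$ is periodic in $p$ modulo $4|D|$ --- i.e., that it is a Dirichlet character at all --- is itself equivalent to the law of quadratic reciprocity. Since Theorem~\ref{Slbg} is a step in Rogers' reciprocity-free proof of Legendre's Lemma, which in turn is what completes Legendre's proof of reciprocity, this input is circular here; the same objection applies to citing Selberg's paper for that particular twisted sum (his apparatus applies to genuine Dirichlet characters), and the paper itself warns that passing to $L$-series of $\left(\frac{D}{\cdot}\right)$ is exactly the move it must avoid.

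The repair is small and turns your proof into the paper's. By multiplicativity of the Legendre symbol and Euler's $\left(\frac{-1}{p}\right)=(-1)^{(p-1)/2}$ (both allowed in this paper), $\chi_{-4}(p)\left(\frac{D}{p}\right)=\left(\frac{-D}{p}\right)$, so your twisted sum is bounded by applying Selberg's lemma to the nonsquare $-D$ together with Mertens, since it equals the difference of the sums over $\left(\frac{-D}{p}\right)=1$ and $\left(\frac{-D}{p}\right)=-1$. This ``replace $D$ by $-D$'' device is precisely how the paper obtains its extra equations $s_1^{+}+s_3^{-}=s_1^{-}+s_3^{+}=\frac{1}{2}\ln x+O(1)$, after which the quarter asymptotics follow by linear algebra. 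One point in your favor: your explicit use of the mod-$4$ Mertens estimate $\sum_{p\le x,\,p\equiv 3\,(4)}\frac{\ln p}{p}=\frac{1}{2}\ln x+O(1)$ (equivalently, boundedness of the $\chi_{-4}$-twisted sum, which is reciprocity-free and also follows from Selberg's lemma with $D=-1$) is genuinely needed: the paper's four displayed equations in $s_1^{\pm},s_3^{\pm}$ are linearly dependent --- they only force $s_1^{+}=s_1^{-}+O(1)$ and $s_3^{+}=s_3^{-}+O(1)$ --- so some input fixing $s_1$ or $s_3$ separately is required, and you supply it.
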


\begin{proof}
We sketch Rogers' nice proof.
Let
$$
s_i:=\sum_{p\leq x, p=4n+i}\frac{\ln p}{p}, \quad \text{for}\quad i=1,3,
$$
and let $s_i^{+},s_i^{-}$ denote the corresponding sums over $p$ for which $\left(\frac{D}{p}\right)=\pm 1$, respectively.  First we note that Selberg's lemma and the Mertens' classical result
$$
\sum_{p\leq x}\frac{\ln p}{p}=\ln x+O(1)
$$
together imply
$$
\sum_{p\leq x, \left(\frac{D}{p}\right)=-1}\frac{\ln p}{p}=\frac{1}{2}\ln x +O(1).
$$
Now
\begin{eqnarray*}
s_1^{+}+s_3^{+} & = & \sum_{p\leq x, p=4n+1,\left(\frac{D}{p}\right)=1}\frac{\ln p}{p}+\sum_{p\leq x, p=4n+3,\left(\frac{D}{p}\right)=1}\frac{\ln p}{p} \\
 & = & \sum_{p\leq x, \left(\frac{D}{p}\right)=1}\frac{\ln p}{p}=\frac{1}{2}\ln x +O(1)
\end{eqnarray*} by Selberg's lemma.  A similar computation in which replacing $D$ by $-D$ leaves $s_1^{\pm}$ invariant while it interchanges $s_3^{+}$ and $s_3^{-}$ and shows
$$
s_1^{-}+s_3^{-} =s_1^{+}+s_3^{-} =s_1^{-}+s_3^{+} =\frac{1}{2}\ln x +O(1)
$$
Thus we have four linear equations in the four unknowns $s_i^{\pm}$, and solving them we obtain
$$
s_i^{\pm}=\frac{1}{4}\ln x +O(1).  
$$
The case $s_3^{-}$ is the equation stated in the theorem

\end{proof}

\noindent Now we let $x\rightarrow\infty$ in Theorem~\ref{Slbg}.  This proves the following result.
\begin{cor}
There exist infinitely many primes $p\equiv 3 \pmod 4$ for which $\left(\frac{D}{p}\right)=-1$
\end{cor}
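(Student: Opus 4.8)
The plan is to deduce this immediately from Theorem~\ref{Slbg} by a contradiction argument; no new estimates are required, since all the analytic work has already been absorbed into the sharper quantitative statement. First I would suppose, for the sake of contradiction, that only finitely many primes $p \equiv 3 \pmod 4$ satisfy $\left(\frac{D}{p}\right) = -1$, say $p_1,\dots,p_r$. Then for every $x$ the sum occurring on the left-hand side of Theorem~\ref{Slbg} is at most the fixed quantity $\sum_{i=1}^{r}\frac{\ln p_i}{p_i}$, a constant independent of $x$; in particular that sum is $O(1)$.

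On the other hand Theorem~\ref{Slbg} asserts that this very sum equals $\frac14\ln x + O(1)$. Comparing the two evaluations forces $\frac14\ln x = O(1)$, i.e. $\ln x$ would be bounded uniformly in $x$, which is absurd as $x\to\infty$. This contradiction shows that the set of primes $p\equiv 3\pmod 4$ with $\left(\frac{D}{p}\right)=-1$ cannot be finite, which is the assertion.

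The only point that deserves a word of care is that the asymptotic in Theorem~\ref{Slbg} is genuinely non-vacuous: its main term $\frac14\ln x$ is unbounded and strictly dominates the $O(1)$ error term as $x\to\infty$, so the left-hand sum must itself be unbounded in $x$. Since each summand $\frac{\ln p}{p}$ is a fixed positive number, an unbounded partial sum of such terms can only arise if infinitely many distinct primes $p$ contribute — and that is exactly what we want. Thus there is no real obstacle here; the substance of the corollary lies entirely in Theorem~\ref{Slbg}, and this is precisely why we established that stronger, quantitative form beforehand.
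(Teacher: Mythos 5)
Your proof is correct and is essentially the paper's own argument: the paper simply lets $x\to\infty$ in Theorem~\ref{Slbg}, and your contradiction argument (finitely many primes would make the sum $O(1)$, contradicting the $\tfrac{1}{4}\ln x$ main term) is just that observation spelled out in detail.
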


In particular, if we put $D$ equal to a prime of the form $4n+1$ we reach our goal.

$\hfill \blacksquare$
\begin{cor}
For every prime $a$ of the form $4n+1$ there exist infinitely may primes $b$ of the form $4n+3$ for which $\left(\frac{a}{b}\right)=-1$
\end{cor}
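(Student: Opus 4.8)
The plan is to read off the statement as the special case $D=a$ of the Corollary just proved, so that the real work has already been done in Theorem~\ref{Slbg}. First I would set $D:=a$; a prime is in particular a nonsquare integer, so Theorem~\ref{Slbg} applies and gives
$$
\sum_{p\leq x,\ p=4n+3,\ \left(\frac{a}{p}\right)=-1}\frac{\ln p}{p}=\frac{1}{4}\ln x+O(1).
$$
Letting $x\to\infty$, the right-hand side is unbounded while a sum of only finitely many terms $\frac{\ln p}{p}$ would remain bounded; hence there are infinitely many primes $p\equiv 3\pmod 4$ with $\left(\frac{a}{p}\right)=-1$, and renaming $p$ as $b$ yields the statement.

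Two bookkeeping points deserve a line. Since every such $b$ satisfies $b\equiv 3\pmod 4$ whereas $a\equiv 1\pmod 4$, automatically $b\neq a$, so $\left(\frac{a}{b}\right)$ is a genuine Legendre symbol taking the value $-1$, exactly as asserted; and because $D=a$, the symbol $\left(\frac{D}{p}\right)$ in the preceding Corollary \emph{is} $\left(\frac{a}{b}\right)$, so no reciprocity-type manipulation is needed --- which matters, since the whole point is that this lemma feeds into the proof of reciprocity. One could equally route the argument through Teege: Theorem~\ref{Teege} already supplies infinitely many such $b$ when $a\equiv 1\pmod 8$, and when $a\equiv 5\pmod 8$ a short extra argument on the prime factors of $k^2+a$ for suitable odd $k$, in the spirit of the easy case discussed earlier, handles the remaining primes.

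I do not expect any serious obstacle at this last step; every difficulty has been absorbed into Theorem~\ref{Slbg}, whose proof rests on Selberg's lemma together with the observation that replacing $a$ by $-a$ fixes $\left(\frac{a}{p}\right)$ on primes $p\equiv 1\pmod 4$ but flips it on primes $p\equiv 3\pmod 4$ --- that is, on Euler's evaluation $\left(\frac{-1}{p}\right)=(-1)^{(p-1)/2}$ and the multiplicativity of the symbol. That symmetry is precisely what forces the progression ``$p\equiv 3\pmod 4,\ \left(\frac{a}{p}\right)=-1$'' to carry a full quarter of the Mertens mass $\sum_{p\leq x}\frac{\ln p}{p}=\ln x+O(1)$; once that is granted, divergence of the associated sum, and hence the infinitude claimed, is immediate.
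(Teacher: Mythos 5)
Your proposal is correct and follows essentially the same route as the paper: you specialize $D:=a$ in Theorem~\ref{Slbg} (equivalently, in the corollary that immediately precedes the statement) and let $x\to\infty$, so the divergence of $\tfrac14\ln x$ forces infinitely many primes $b\equiv 3\pmod 4$ with $\left(\frac{a}{b}\right)=-1$, which is exactly how the paper concludes. Your aside about an alternative route through Teege's theorem is a harmless extra remark and does not change the fact that the core argument coincides with the paper's.
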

$\hfill \blacksquare$

Of course, this is (a strong form of ) Legendre's Lemma.

Rogers actually proves a much stronger result which we do not need; we have extracted only a special case from the first part of his investigation.

It would seem that Rogers' paper has suffered the same fate as Teege's.  Although it appeared in a distinguished journal, the Mathematical Reviews and Zentralblatt fur Mathematik und ihre Grenzgebiete  only reproduce the introduction in his paper, while there is no comment on the mathematical content and, with the exception of Lemmermeyer's book \cite{Lemmermeyer} we have not found any references to his paper in the almost half-century since its publication in any of the papers/textbooks that deal with the first proof of the reciprocity theorem.

We remark that if one writes out in full detail the proof of Selberg's lemma, it is almost as long and complicated as the proof of Dirichlet's class number formulas.  Nevertheless, Rogers refers to it as ``...a simple lemma..."

Finally we note that Selberg used his identity to prove the non-vanishing of an L-series just as Dirichlet used his class-number formulas.  But, up to now, the latter have had an enormous influence on the subsequent development of algebraic number theory and commutative algebra while the former, as of yet, has not.

\section{Retrospect}

Apparently Teege wrote his first attempted proof of Legendre's Lemma \cite{Teege1} before 1914, although it was not published until 1920, perhaps because of the intervention of WW I.  But it contained an important error which he repaired in his second attempt \cite{Teege} which was published four years later.  This second paper also contained the first appearance of Teege's refinement (as he, himself, writes with pride.)  The magazine's editor writes in a footnote that the manuscript of Teege's second paper was ready for printing although the author was deceased. (The paper appeared with a dagger, $\dagger $, along side the author's name).  Then, as noted earlier, unacountably his paper was later forgotten.  Given Teege's identity (which Teege never explicitly sets out but is an easy consequence of his reasoning: ``...by a stroke of the pen" as G. Watson wrote of an implicit identity of Ramanujan) his proof of Legendre's lemma is quite elegant and deserves to be known.

We see that the proof of Teege's identity, itself, is long and complex because it depends on (the easy part (!)) of Dirichlet's class number formulas and the proof of the latter is quite involved.  Nevertheless, we are compensated by the fact that its use in the proof of Legendre's Lemma  \emph{is} elementary and quite beautiful and elevates Legendre's attempted proof to the list of complete proofs of the quadratic reciprocity law.


\end{document}